\newtheorem{theo}{Theorem}[section]
\newtheorem{proposition}[theo]{Proposition}
\newtheorem{rem}[theo]{Remark}
\newtheorem{definition}[theo]{Definition}
\begin{document}

\title{The Bijectivity of Mirror Functors on Tori}

\author{Kazushi Kobayashi\footnote{Department of Mathematics, Graduate School of Science, Osaka University, Toyonaka, Osaka, 560-0043, Japan. E-mail : k-kobayashi@cr.math.sci.osaka-u.ac.jp. 2010 Mathematics Subject Classification : 14J33 (primary), 14F05, 53D37 (secondary). Keywords : torus, homological mirror symmetry, SYZ transform.}}

\date{}

\maketitle

\begin{abstract}
By the SYZ construction, a mirror pair $(X,\check{X})$ of a complex torus $X$ and a mirror partner $\check{X}$ of the complex torus $X$ is described as the special Lagrangian torus fibrations $X\rightarrow B$ and $\check{X}\rightarrow B$ on the same base space $B$. Then, by the SYZ transform, we can construct a simple projectively flat bundle on $X$ from each affine Lagrangian multi section of $\check{X}\rightarrow B$ with a unitary local system along it. However, there are ambiguities of the choices of transition functions of it, and this causes difficulties when we try to construct a functor between the symplectic geometric category and the complex geometric category. In this paper, we prove that there exists a bijection between the set of the isomorphism classes of their objects by solving this problem.
\end{abstract}

\tableofcontents

\section{Introduction}
Let $X$ be an $n$-dimensional complex torus, and we denote by $\check{X}$ a mirror partner of the complex torus $X$. For this mirror pair $(X,\check{X})$, the homological mirror symmetry conjecture \cite{Kon}, which is proposed by Kontsevich in 1994, states that there exists an equivalence
\begin{equation*}
D^b(Coh(X)) \cong Tr(Fuk(\check{X}))
\end{equation*}
as triangulated categories. Here, $D^b(Coh(X))$ is the bounded derived category of coherent sheaves on $X$, and $Tr(Fuk(\check{X}))$ is the derived category of the Fukaya category $Fuk(\check{X})$ on $\check{X}$ \cite{Fukaya category} obtained by the Bondal-Kapranov-Kontsevich construction \cite{bondal}, \cite{Kon}. Historically this conjecture has been first studied when $(X,\check{X})$ is a pair of elliptic curves (see \cite{elliptic}, \cite{A-inf}, \cite{abouzaid} etc.), and after that, when $(X, \check{X})$ is a pair of abelian varieties of higher dimension generalizing the case of elliptic curves \cite{Fuk} (see also \cite{dg}). 

On the other hand, the SYZ construction \cite{SYZ} by Strominger, Yau, and Zaslow in 1996, proposes a way of constructing mirror pairs geometrically. By this construction, the mirror pair $(X, \check{X})$ is realized as the trivial special Lagrangian torus fibrations $\pi : X\rightarrow B$ and $\check{\pi } : \check{X}\rightarrow B$ on the same base space $B$ which is homeomorphic to an $n$-dimensional real torus. Here, for each point $b\in B$, the special Lagrangian torus fibers $\pi ^{-1}(b)$ and $\check{\pi }^{-1}(b)$ are related by the T-duality. In particular, it is expected that the homological mirror symmetry on the mirror pair $(X,\check{X})$ is realized by the SYZ transform (an analogue of the Fourier-Mukai transform) along the special Lagrangian torus fibers of $\pi : X\rightarrow B$ and $\check{\pi } : \check{X}\rightarrow B$.
 
Concerning the above discussions, we explain the purpose of this paper. For a given mirror pair $(X,\check{X})$, we regard it as the trivial special Lagrangian torus fibrations $\pi : X\rightarrow B$ and $\check{\pi } : \check{X}\rightarrow B$ in the sense of the SYZ construction. First, in the symplectic geometry side, we consider the full subcategory $Fuk_{\mathrm{aff}}(\check{X})$ of the Fukaya category $Fuk(\check{X})$ consisting of affine Lagrangian multi sections of $\check{\pi } : \check{X}\rightarrow B$ with unitary local systems along them (in this paper, we sometimes call $Fuk_{\mathrm{aff}}(\check{X})$ simply the Fukaya category). Then, according to the discussions in \cite{leung} and \cite{A-P}, we can obtain a holomorphic vector bundle on $X$ which admits a constant curvature connection from each object of $Fuk_{\mathrm{aff}}(\check{X})$. This is called the SYZ transform. More precisely, the above constant curvature is expressed locally as
\begin{equation}
dz^t R d\bar{z}\cdot \mathrm{id}, \label{constcurv}
\end{equation}
where $z=(z_1,\cdots, z_n)^t$ is the local complex coordinates of $X$, and $R$ is a constant matrix of order $n$ (actually, $R$ is a Hermitian matrix of order $n$). On the other hand, for a holomorphic vector bundle on $X$ with the Hermitian connection, if its curvature form is expressed locally as the form (\ref{constcurv}), such a holomorphic vector bundle admits a projectively flat structure (for example, see \cite{koba}). Therefore, we see that each object of $Fuk_{\mathrm{aff}}(\check{X})$ is transformed to a projectively flat bundle on $X$, which in particular becomes simple. However, there are ambiguities of the choices of transition functions of it. In this paper, we consider a DG-category $DG_X$ consisting of such simple projectively flat bundles with any compatible transition functions. We expect that this $DG_X$ generates $D^b(Coh(X))$ though we do not discuss it in this paper. At least, it is known that it split-generates $D^b(Coh(X))$ when $X$ is an abelian variety (cf. \cite{orlov}, \cite{abouzaid}). In this setting, when we fix a choice of transition functions of holomorphic vector bundles in $DG_X$, we obtain a map
\begin{equation*}
\iota  : \mathrm{Ob}(Fuk_{\mathrm{aff}}(\check{X}))\rightarrow \mathrm{Ob}(DG_X)
\end{equation*}
by the SYZ transform. Then, for example, it is also shown in \cite[Proposition 13.26]{Fuk} that the map $\iota $ induces an injection
\begin{equation*}
\iota ^{isom} : \mathrm{Ob}^{isom}(Fuk_{\mathrm{aff}}(\check{X}))\rightarrow \mathrm{Ob}^{isom}(DG_X),
\end{equation*}
where $\mathrm{Ob}^{isom}(DG_X)$ and $\mathrm{Ob}^{isom}(Fuk_{\mathrm{aff}}(\check{X}))$ denote the set of the isomorphism classes of holomorphic vector bundles in $DG_X$ and the set of the isomorphism classes of objects of $Fuk_{\mathrm{aff}}(\check{X})$, respectively. In the present paper, we prove that the map $\iota ^{isom}$ is actually a bijection by constructing a natural map
\begin{equation*}
F : \mathrm{Ob}(DG_X)\rightarrow \mathrm{Ob}(Fuk_{\mathrm{aff}}(\check{X}))
\end{equation*}
whose direction is opposite to the direction of the map $\iota $. 

Of course, though we can also regard this result as the first step to prove the homological mirror symmetry conjecture on $(X,\check{X})$, it is a stronger statement in the following sense. In general, for two $A_{\infty }$-categories $\mathscr{C}$ and $\mathscr{C}'$, if there exists an $A_{\infty }$-equivalence $\mathscr{C}\cong \mathscr{C}'$, then it is known that there exists an equivalence $Tr(\mathscr{C})\cong Tr(\mathscr{C}')$ as triangulated categories. Hence, in order to prove the homological mirror symmetry conjecture 
\begin{equation*}
Tr(DG_X)\cong Tr(Fuk_{\mathrm{aff}}(\check{X}))
\end{equation*}
on $(X,\check{X})$, it is enough to prove that there exists an $A_{\infty }$-equivalence
\begin{equation*}
\mathscr{C}_X\cong \mathscr{C}_{\check{X}}
\end{equation*}
for some full subcategories $\mathscr{C}_X\subset DG_X$, $\mathscr{C}_{\check{X}}\subset Fuk_{\mathrm{aff}}(\check{X})$ which generate the respective triangulated categories $Tr(\mathscr{C}_X)\cong Tr(DG_X)$, $Tr(\mathscr{C}_{\check{X}})\cong Tr(Fuk_{\mathrm{aff}}(\check{X}))$ (as such categories $\mathscr{C}_X$ and $\mathscr{C}_{\check{X}}$, we would like to take the smallest possible category on each side, see also \cite{dg}). Thus, from the viewpoint of the proof of the homological mirror symmetry conjecture on $(X, \check{X})$, if we can construct a bijection $\mathrm{Ob}^{isom}(\mathscr{C}_X)\rightarrow \mathrm{Ob}^{isom}(\mathscr{C}_{\check{X}})$ and prove the existence of an $A_{\infty}$-equivalence $\mathscr{C}_X\cong \mathscr{C}_{\check{X}}$, then the map $\iota ^{isom}$ itself need not be bijective. Actually, our original motivation of proving the bijectivity of the map $\iota ^{isom}$ is that we are interested in the DG-category $DG_X$ itself and the corresponding geometric objects in the mirror dual side. In particular, in \cite{D}, exact triangles in $Tr(DG_X)$ consisting of objects of $DG_X$ are studied by using the map $F$ (see also \cite{kazushi}). Although \cite{kazushi2} is closely related to \cite{D}, we are going to revise \cite{kazushi2} by using the main result in the present paper. Furthermore, the map $F$ is also employed in discussions in \cite{kazushi3}.

This paper is organized as follows. In section 2, we take a complex torus $X$, and explain the definition of a mirror partner $\check{X}$ of the complex torus $X$. In section 3, we define a class of a certain kind of simple projectively flat bundles on $X$, and construct the DG-category $DG_X$ consisting of those holomorphic vector bundles. We also study the isomorphism classes of them in section 3. In section 4, we consider the Fukaya category $Fuk_{\mathrm{aff}}(\check{X})$ consisting of affine Lagrangian multi sections of $\check{\pi } : \check{X}\rightarrow B$ with unitary local systems along them, and study the isomorphism classes of objects of $Fuk_{\mathrm{aff}}(\check{X})$. In section 5, we explicitly construct the bijection $\mathrm{Ob}^{isom}(DG_X)\rightarrow \mathrm{Ob}^{isom}(Fuk_{\mathrm{aff}}(\check{X}))$. This result is given in Theorem \ref{bijectivity}.

\section{Preparations}
In this section, we define a complex torus $T^{2n}_{J=T}$ and its mirror partner $\check{T}^{2n}_{J=T}$.

First, we define an $n$-dimensional complex torus $T^{2n}_{J=T}$ as follows. Let $T$ be a complex matrix of order $n$ such that $\mathrm{Im}T$ is positive definite. We consider the lattice $2\pi (\mathbb{Z}^n\oplus T\mathbb{Z}^n)$ in $\mathbb{C}^n$ and define
\begin{equation*}
T^{2n}_{J=T}:=\mathbb{C}^n/2\pi (\mathbb{Z}^n\oplus T\mathbb{Z}^n).
\end{equation*}
Sometimes we regard the $n$-dimensional complex torus $T^{2n}_{J=T}$ as a $2n$-dimensional real torus $\mathbb{R}^{2n}/2\pi \mathbb{Z}^{2n}$. In this paper, we further assume that $T$ is a non-singular matrix. Actually, in our setting described bellow, it turns out that the mirror partner of the complex torus $T^{2n}_{J=T}$ does not exist if $\mathrm{det}T=0$. However, we can avoid this problem and discuss the homological mirror symmetry even if $\mathrm{det}T=0$ by modifying the definition of the mirror partner of the complex torus $T^{2n}_{J=T}$ and the class of holomorphic vector bundles which we treat. This case is discussed in \cite{kazushi3}. Here, we fix an $\varepsilon >0$ small enough and let
\begin{flalign*}
&O^{l_1\cdots l_n}_{m_1\cdots m_n}:=\biggl\{ \left(\begin{array}{ccc}x\\y\end{array}\right)\in T^{2n}_{J=T} \ | \ \frac{2}{3}\pi (l_j-1)-\varepsilon <x_j<\frac{2}{3}\pi l_j+\varepsilon , &\\
&\hspace{41mm}\frac{2}{3}\pi (m_k-1)-\varepsilon <y_k<\frac{2}{3}\pi m_k+\varepsilon, \ j,k=1,\cdots, n \biggr\}&
\end{flalign*}
be subsets of $T^{2n}_{J=T}$, where $l_j$, $m_k=1,2,3$, 
\begin{equation*}
x:=(x_1,\cdots, x_n)^t,\ y:=(y_1,\cdots, y_n)^t,
\end{equation*}
and we identify $x_i\sim x_i+2\pi $, $y_i\sim y_i+2\pi $ for each $i=1,\cdots, n$. Sometimes we denote $O^{l_1\cdots (l_j =l) \cdots l_n}_{m_1\cdots (m_k=m) \cdots m_n}$ instead of $O^{l_1\cdots l_n}_{m_1\cdots m_n}$ in order to specify the values $l_j=l$, $m_k=m$. Then, $\{ O^{l_1\cdots l_n}_{m_1\cdots m_n} \}_{l_j, m_k=1,2,3}$ is an open cover of $T^{2n}_{J=T}$, and we define the local coordinates of $O^{l_1\cdots l_n}_{m_1\cdots m_n}$ by
\begin{equation*}
(x_1,\cdots, x_n, y_1,\cdots, y_n)^t\in \mathbb{R}^{2n}.
\end{equation*}
Furthermore, we locally express the complex coordinates $z:=(z_1,\cdots, z_n)^t$ of $T^{2n}_{J=T}$ by $z=x+Ty$.

Next, we define a mirror partner $\check{T}^{2n}_{J=T}$ of $T^{2n}_{J=T}$. From the viewpoint of the SYZ construction, we should describe $\check{T}^{2n}_{J=T}$ as the trivial special Lagrangian torus fibration. However, here, instead of constructing it, we consider the $2n$-dimensional standard real torus $T^{2n}=\mathbb{R}^{2n}/2\pi \mathbb{Z}^{2n}$ with a modified (non standard) symplectic form. For each point $(x^1,\cdots, x^n, y^1,\cdots, y^n)^t \in T^{2n}$, we identify $x^i\sim x^i+2\pi $, $y^i\sim y^i+2\pi $, where $i=1,\cdots, n$. We also denote by $(x^1,\cdots, x^n, y^1,\cdots, y^n)^t$ the local coordinates in the neighborhood of an arbitrary point $(x^1,\cdots, x^n, y^1,\cdots, y^n)^t\in T^{2n}$. Furthermore, we use the same notation $(x^1,\cdots, x^n, y^1,\cdots, y^n)^t$ when we denote the coordinates of the covering space $\mathbb{R}^{2n}$ of $T^{2n}$. For simplicity, we set
\begin{equation*}
\check{x}:=(x^1,\cdots, x^n)^t, \ \check{y}:=(y^1,\cdots, y^n)^t.
\end{equation*}
We define a complexified symplectic form $\tilde{\omega } $ on $T^{2n}$ by
\begin{equation*}
\tilde{\omega } :=d\check{x}^t (-T^{-1})^t d\check{y},
\end{equation*}
where $d\check{x}:=(dx^1,\cdots, dx^n)^t$ and $d\check{y}:=(dy^1,\cdots, dy^n)^t$. We decompose $\tilde{\omega }$ into
\begin{equation*}
\tilde{\omega } =d\check{x}^t \mathrm{Re}(-T^{-1})^t d\check{y} +\mathbf{i} d\check{x}^t \mathrm{Im}(-T^{-1})^t d\check{y},
\end{equation*}
and define
\begin{equation*}
\omega :=\mathrm{Im}(-T^{-1})^t, \ B:=\mathrm{Re}(-T^{-1})^t.
\end{equation*}
Here, $\mathbf{i}=\sqrt{-1}$. Sometimes we identify the matrices $\omega $ and $B$ with the 2-forms $d\check{x}^t\omega d\check{y}$ and $d\check{x}^tBd\check{y}$, respectively. Then, $\omega $ gives a symplectic form on $T^{2n}$. The closed 2-form $B$ is often called the B-field. This complexified symplectic torus $(T^{2n},\tilde{\omega } =d\check{x}^t(-T^{-1})^td\check{y})$ is a mirror partner of the complex torus $T^{2n}_{J=T}$. Hereafter, we denote
\begin{equation*}
\check{T}^{2n}_{J=T}:=(T^{2n},\tilde{\omega } =d\check{x}^t(-T^{-1})^td\check{y})
\end{equation*}
for simplicity.

\section{Complex geometry side}
The purpose of this section is to explain the complex geometry side in the homological mirror symmetry setting on $(T^{2n}_{J=T},\check{T}^{2n}_{J=T})$. In subsection 3.1, we define a class of holomorphic vector bundles 
\begin{equation*}
E_{(r,A,r',\mathcal{U},p,q)}\rightarrow T^{2n}_{J=T}, 
\end{equation*}
and construct a DG-category 
\begin{equation*}
DG_{T^{2n}_{J=T}}
\end{equation*}
consisting of these holomorphic vector bundles $E_{(r,A,r',\mathcal{U},p,q)}$. In particular, we first construct $E_{(r,A,r',\mathcal{U},p,q)}$ as a complex vector bundle, and then discuss when it becomes a holomorphic vector bundle in Proposition \ref{prophol}. In subsection 3.2, we study the isomorphism classes of holomorphic vector bundles $E_{(r,A,r',\mathcal{U},p,q)}$ by using the classification result of simple projectively flat bundles on complex tori by Matsushima \cite{matsu} and Mukai \cite{mukai}.

\subsection{The definition of $E_{(r,A,r',\mathcal{U},p,q)}$}

We assume $r\in \mathbb{N}$, $A=(a_{ij})\in M(n;\mathbb{Z})$, and $p$, $q\in \mathbb{R}^n$. First, we define $r'\in \mathbb{N}$ by using a given pair $(r,A)\in \mathbb{N}\times M(n;\mathbb{Z})$ as follows. By the theory of elementary divisors, there exist two matrices $\mathcal{A}$, $\mathcal{B}\in GL(n;\mathbb{Z})$ such that 
\begin{equation}
\mathcal{A}A\mathcal{B}=\left( \begin{array}{cccccc} \tilde{a_1} & & & & & \\ & \ddots & & & & \\ & & \tilde{a_s} & & & \\ & & & 0 & & \\ & & & & \ddots & \\ & & & & & 0 \end{array} \right), \label{matAB}
\end{equation}
where $\tilde{a_i}\in \mathbb{N}$ ($i=1,\cdots, s, 1\leq s\leq n$) and $\tilde{a_i}|\tilde{a_{i+1}}$ ($i=1,\cdots, s-1$). Then, we define $r_i'\in \mathbb{N}$ and $a_i'\in \mathbb{Z}$ ($i=1,\cdots, s$) by
\begin{equation*}
\frac{\tilde{a_i}}{r}=\frac{a_i'}{r_i'}, \ gcd(r_i',a_i')=1,
\end{equation*}
where $gcd(m,n)>0$ denotes the greatest common divisor of $m$, $n\in \mathbb{Z}$. By using these, we set
\begin{equation}
r':=r_1'\cdots r_s'\in \mathbb{N}. \label{r'}
\end{equation}
This $r'\in \mathbb{N}$ is uniquely defined by a given pair $(r,A)\in \mathbb{N}\times M(n;\mathbb{Z})$, and it is actually the rank of $E_{(r,A,r',\mathcal{U},p,q)}$. Now, we define the transition functions of $E_{(r,A,r',\mathcal{U},p,q)}$ as follows (although the following notations are complicated, roughly speaking, the transition functions of $E_{(r,A,r',\mathcal{U},p,q)}$ in the cases of $x_j\mapsto x_j+2\pi $, $y_k\mapsto y_k+2\pi $ are given by $e^{\frac{\mathbf{i}}{r}a_jy}V_j$, $U_k$, respectively, where $j$, $k=1,\cdots,n$). Let
\begin{equation*}
\psi ^{l_1 \cdots l_n}_{m_1 \cdots m_n} : O^{l_1 \cdots l_n}_{m_1 \cdots m_n}\rightarrow O^{l_1 \cdots l_n}_{m_1 \cdots m_n}\times \mathbb{C}^{r'}, \hspace{5mm}l_j, m_k =1,2,3
\end{equation*}
be a smooth section of $E_{(r,A,r',\mathcal{U},p,q)}|_{O^{l_1 \cdots l_n}_{m_1 \cdots m_n}}$. The transition functions of $E_{(r,A,r',\mathcal{U},p,q)}$ are non-trivial on 
\begin{align*}
&O^{(l_1=3) \cdots l_n}_{m_1 \cdots m_n}\cap O^{(l_1=1) \cdots l_n}_{m_1 \cdots m_n}, \ O^{l_1 (l_2=3) \cdots l_n}_{m_1 \cdots m_n}\cap O^{l_1 (l_2=1) \cdots l_n}_{m_1 \cdots m_n},\cdots, O^{l_1 \cdots (l_n=3)}_{m_1 \cdots m_n}\cap O^{l_1 \cdots (l_n=1)}_{m_1 \cdots m_n},\\
&O^{l_1 \cdots l_n}_{(m_1=3) \cdots m_n}\cap O^{l_1 \cdots l_n}_{(m_1=1) \cdots m_n}, \ O^{l_1 \cdots l_n}_{m_1 (m_2=3) \cdots m_n}\cap O^{l_1 \cdots l_n}_{m_1 (m_2=1) \cdots m_n},\cdots, \\
&O^{l_1 \cdots l_n}_{m_1 \cdots (m_n=3)}\cap O^{l_1 \cdots l_n}_{m_1 \cdots (m_n=1)},
\end{align*}
and otherwise are trivial. We define the transition function on $O^{l_1 \cdots (l_j =3) \cdots l_n}_{m_1 \cdots m_n}\cap O^{l_1 \cdots (l_j =1) \cdots l_n}_{m_1 \cdots m_n}$ by
\begin{align*}
&\left.\psi ^{l_1 \cdots (l_j =3) \cdots l_n}_{m_1 \cdots m_n} \right|_{O^{l_1 \cdots (l_j =3) \cdots l_n}_{m_1 \cdots m_n}\cap O^{l_1 \cdots (l_j =1) \cdots l_n}_{m_1 \cdots m_n}}\\
&=e^{\frac{\mathbf{i}}{r}a_j y}V_j \left.\psi ^{l_1 \cdots (l_j =1) \cdots l_n}_{m_1 \cdots m_n} \right|_{O^{l_1 \cdots (l_j =3) \cdots l_n}_{m_1 \cdots m_n}\cap O^{l_1 \cdots (l_j =1) \cdots l_n}_{m_1 \cdots m_n}},
\end{align*}
where $a_j:=(a_{1j},\cdots,a_{nj})\in \mathbb{Z}^n$ and $V_j\in U(r')$. Similarly, we define the transition function on $O^{l_1 \cdots l_n}_{m_1 \cdots (m_k =3) \cdots m_n}\cap O^{l_1 \cdots l_n}_{m_1 \cdots (m_k=1) \cdots m_n}$ by
\begin{align*}
&\left.\psi ^{l_1 \cdots l_n}_{m_1 \cdots (m_k=3) \cdots m_n} \right|_{O^{l_1 \cdots l_n}_{m_1 \cdots (m_k =3) \cdots m_n}\cap O^{l_1 \cdots l_n}_{m_1 \cdots (m_k=1) \cdots m_n}}\\
&=U_k \left.\psi ^{l_1 \cdots l_n}_{m_1 \cdots (m_k=1) \cdots m_n} \right|_{O^{l_1 \cdots l_n}_{m_1 \cdots (m_k =3) \cdots m_n}\cap O^{l_1 \cdots l_n}_{m_1 \cdots (m_k=1) \cdots m_n}},
\end{align*}
where $U_k\in U(r')$. In the definitions of these transition functions, actually, we only treat $V_j$, $U_k\in U(r')$ which satisfy the cocycle condition, so we explain the cocycle condition below. When we define 
\begin{align*}
&\left.\psi ^{l_1 \cdots (l_j =3) \cdots l_n}_{m_1 \cdots (m_k=3) \cdots m_n} \right|_{O^{l_1 \cdots (l_j =3) \cdots l_n}_{m_1 \cdots (m_k =3) \cdots m_n}\cap O^{l_1 \cdots (l_j =1) \cdots l_n}_{m_1 \cdots (m_k=1) \cdots m_n}}\\
&=U_k \left.\psi ^{l_1 \cdots (l_j =3) \cdots l_n}_{m_1 \cdots (m_k=1) \cdots m_n} \right|_{O^{l_1 \cdots (l_j =3) \cdots l_n}_{m_1 \cdots (m_k =3) \cdots m_n}\cap O^{l_1 \cdots (l_j =1) \cdots l_n}_{m_1 \cdots (m_k=1) \cdots m_n}}\\
&=\Bigl (U_k\Bigr )\left (e^{\frac{\mathbf{i}}{r}a_j y}V_j\right ) \left.\psi ^{l_1 \cdots (l_j =1) \cdots l_n}_{m_1 \cdots (m_k=1) \cdots m_n} \right|_{O^{l_1 \cdots (l_j =3) \cdots l_n}_{m_1 \cdots (m_k =3) \cdots m_n}\cap O^{l_1 \cdots (l_j =1) \cdots l_n}_{m_1 \cdots (m_k=1) \cdots m_n}},
\end{align*}
the cocycle condition is expressed as
\begin{equation*}
V_j V_k=V_k V_j,\ U_j U_k=U_k U_j,\ \zeta  ^{-a_{kj}}U_k V_j=V_j U_k,
\end{equation*}
where $\zeta $ is the $r$-th root of 1 and $j,k=1,\cdots,n$. We define a set $\mathcal{U}$ of unitary matrices by 
\begin{align*}
&\mathcal{U}:= \Bigl \{ V_j , U_k \in U(r') \ | \ V_j V_k=V_k V_j,\ U_j U_k=U_k U_j,\ \zeta ^{-a_{kj}}U_k V_j=V_j U_k, \notag \\
& \hspace{42mm} j,k=1,\cdots ,n \Bigr \}. 
\end{align*}
Of course, how to define the set $\mathcal{U}$ relates closely to (in)decomposability of $E_{(r,A,r',\mathcal{U},p,q)}$. Here, we only treat the set $\mathcal{U}$ such that $E_{(r,A,r',\mathcal{U},p,q)}$ is simple. Actually, we can take such a set $\mathcal{U}\not=\emptyset $ for any $(r,A,r')\in \mathbb{N}\times M(n;\mathbb{Z})\times \mathbb{N}$, and this fact is discussed in Proposition \ref{simplicity}. Furthermore, we define a connection $\nabla_{(r,A,r',\mathcal{U},p,q)}$ on $E_{(r,A,r',\mathcal{U},p,q)}$ locally as 
\begin{equation*}
\nabla_{(r,A,r',\mathcal{U},p,q)}:=d-\frac{\mathbf{i}}{2\pi } \left( \left( \frac{1}{r}x^t A^t +\frac{1}{r}p^t \right) +\frac{1}{r}q^t T \right) dy\cdot I_{r'},
\end{equation*}
where $dy:=(dy_1,\cdots,dy_n)^t$ and $d$ denotes the exterior derivative. In fact, $\nabla_{(r,A,r',\mathcal{U},p,q)}$ is compatible with the transition functions and so defines a global connection. Then, its curvature form $\Omega _{(r,A,r',\mathcal{U},p,q)}$ is expressed locally as
\begin{equation}
\Omega _{(r,A,r',\mathcal{U},p,q)}=-\frac{\mathbf{i}}{2\pi r}dx^t A^t dy\cdot I_{r'}, \label{curvature}
\end{equation}
where $dx:=(dx_1,\cdots,dx_n)^t$. In particular, this local expression (\ref{curvature}) implies that holomorphic vector bundles $E_{(r,A,r',\mathcal{U},p,q)}$ are simple projectively flat bundles (for example, the definition of projectively flat bundles is written in \cite{koba}). Moreover, the interpretation for these simple projectively flat bundles $E_{(r,A,r',\mathcal{U},p,q)}$ by using factors of automorphy is given in section 3 of \cite{kazushi2}. Now, we consider the condition such that $E_{(r,A,r',\mathcal{U},p,q)}$ is holomorphic. We see that the following proposition holds.
\begin{proposition}\label{prophol}
For a given quadruple $(r,A,p,q)\in \mathbb{N}\times M(n;\mathbb{Z})\times \mathbb{R}^n\times \mathbb{R}^n$, the complex vector bundle $E_{(r,A,r',\mathcal{U},p,q)}\rightarrow T^{2n}_{J=T}$ is holomorphic if and only if $AT=(AT)^t$ holds.
\end{proposition}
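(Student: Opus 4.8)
The plan is to use the standard integrability criterion for holomorphic structures: a complex vector bundle equipped with a connection $\nabla$ over a complex manifold admits a holomorphic structure for which $\nabla^{0,1}=\bar\partial$ (equivalently, for which the local $\nabla^{0,1}$-flat frames are holomorphic frames) if and only if the $(0,2)$-part of its curvature vanishes; this is the Koszul--Malgrange theorem, i.e. the Newlander--Nirenberg integrability for $\bar\partial$-operators. Since $\nabla_{(r,A,r',\mathcal{U},p,q)}$ is a scalar connection, its curvature (\ref{curvature}) is $\mathbf{i}$ times a real scalar $2$-form tensored with $I_{r'}$, so the whole question reduces to deciding when the real $2$-form $dx^t A^t dy$ has vanishing $(0,2)$-part with respect to the complex structure $z=x+Ty$.

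First I would rewrite the real coframe $dx,dy$ in terms of the holomorphic and antiholomorphic coframes $dz=dx+T\,dy$ and $d\bar z=dx+\bar T\,dy$. Setting $S:=\mathrm{Im}\,T$ (invertible, being positive definite) and $P:=\frac{1}{2\mathbf{i}}S^{-1}$, a one-line computation gives $dy=P(dz-d\bar z)$ and $dx=dz-T\,dy$, so the antiholomorphic parts are $[dy]^{0,1}=-P\,d\bar z$ and $[dx]^{0,1}=TP\,d\bar z$. Substituting these into $dx^t A^t dy$ and keeping only the $d\bar z\wedge d\bar z$ contributions, the $(0,2)$-part becomes, up to the nonzero constant $-\frac{\mathbf{i}}{2\pi r}$, the expression $-\,d\bar z^{\,t}\,\bigl(P^t T^t A^t P\bigr)\,d\bar z$.

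Next I would invoke the elementary fact that a quadratic form $d\bar z^{\,t} M\,d\bar z$ in anticommuting $1$-forms detects only the antisymmetric part of $M$, so the $(0,2)$-part vanishes precisely when $M=P^t T^t A^t P$ is symmetric. Because $P$ is invertible, cancelling $P^t$ on the left and $P$ on the right in $M=M^t$ reduces this to $T^t A^t=AT$, that is $AT=(AT)^t$, which is exactly the asserted condition. Reading the computation in reverse supplies the converse: when $AT=(AT)^t$ the curvature has vanishing $(0,2)$-part, so $(\nabla^{0,1})^2=0$ and the Koszul--Malgrange theorem furnishes the holomorphic structure.

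I expect the only genuinely delicate point to be the bookkeeping of the coordinate change and the justification that holomorphicity can be read off from $\Omega^{0,2}$ alone. Here it is worth noting that, since the scalar $2$-form $dx^t A^t dy$ has real coefficients, its $(0,2)$- and $(2,0)$-parts are complex conjugates, so $\Omega^{0,2}=0$ is in fact equivalent to $\Omega$ being of pure type $(1,1)$; and one must still check that the resulting $\bar\partial$-flat frames are compatible with the prescribed transition functions $e^{\frac{\mathbf{i}}{r}a_j y}V_j$ and $U_k$, so that they genuinely assemble into a global holomorphic bundle. The underlying linear algebra is otherwise routine.
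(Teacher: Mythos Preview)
Your proposal is correct and follows essentially the same approach as the paper: both invoke the integrability criterion $\Omega^{(0,2)}=0$, compute the $(0,2)$-part of $dx^tA^t\,dy$ via the change of coframe $dz=dx+T\,dy$, $d\bar z=dx+\bar T\,dy$, and reduce the vanishing to the symmetry of $P^tT^tA^tP$ (equivalently $\{T(T-\bar T)^{-1}\}^tA^t(T-\bar T)^{-1}$), hence to $AT=(AT)^t$. Your write-up is somewhat more explicit about the Koszul--Malgrange step and the compatibility with the transition functions, but the computational core is identical to the paper's.
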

\begin{proof}
A complex vector bundle is holomorphic if and only if the (0,2)-part of its curvature form vanishes, so we calculate the (0,2)-part of $\Omega _{(r,A,\mu ,\mathcal{U})}$. It turns out to be
\begin{equation*}
\Omega ^{(0,2)}_{(r,A,r',\mathcal{U},p,q)}=\frac{\mathbf{i}}{2\pi r}d\bar{z}^t\{ T(T-\bar{T} )^{-1} \}^t A^t (T-\bar{T})^{-1}d\bar{z}\cdot I_{r'} ,
\end{equation*}
where $d\bar{z} :=(d\bar{z}_1,\cdots,d\bar{z}_n)^t$. Thus, $\Omega ^{(0,2)}_{(r,A,r',\mathcal{U},p,q)}=0$ is equivalent to that $\{ T(T-\bar{T} )^{-1} \}^t A^t (T-\bar{T} )^{-1}$ is a symmetric matrix, i.e., $AT=(AT)^t$.
\end{proof}
Concerning the above discussions, here, we mention the simplicity of holomorphic vector bundles $E_{(r,A,r',\mathcal{U},p,q)}$. The following proposition holds.
\begin{proposition} \label{simplicity}
For each quadruple $(r,A,p,q)\in \mathbb{N}\times M(n;\mathbb{Z})\times \mathbb{R}^n\times \mathbb{R}^n$, we can take a set $\mathcal{U}\not=\emptyset $ such that $E_{(r,A,r',\mathcal{U},p,q)}$ is simple.
\end{proposition}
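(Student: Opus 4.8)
The plan is to reduce the simplicity of $E_{(r,A,r',\mathcal{U},p,q)}$ to a purely representation-theoretic statement about the family $\{V_j,U_k\}$, and then to exhibit such a family explicitly by building a Weyl (clock--shift) representation adapted to the Smith normal form (\ref{matAB}) of $A$.

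First I would make the reduction precise. The transition functions of $E_{(r,A,r',\mathcal{U},p,q)}$ are $e^{\frac{\mathbf{i}}{r}a_jy}V_j$ and $U_k$, which are unitary-valued. Under the conjugation action defining $\mathrm{End}(E_{(r,A,r',\mathcal{U},p,q)})$, the scalar factors $e^{\frac{\mathbf{i}}{r}a_jy}$ cancel, so $\mathrm{End}(E_{(r,A,r',\mathcal{U},p,q)})$ carries the \emph{constant} unitary transition functions $\Phi\mapsto V_j\Phi V_j^{-1}$ and $\Phi\mapsto U_k\Phi U_k^{-1}$; in particular it is a flat bundle (consistent with (\ref{curvature}), since the curvature is scalar and acts trivially by conjugation). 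Lifting a global holomorphic section to the universal cover $\mathbb{C}^n$ gives a holomorphic matrix-valued function whose conjugation-invariant norm is lattice-periodic, hence bounded, hence constant by Liouville's theorem. Thus the holomorphic endomorphisms of $E_{(r,A,r',\mathcal{U},p,q)}$ are exactly the constant matrices $\Phi\in M(r';\mathbb{C})$ with $V_j\Phi V_j^{-1}=\Phi=U_k\Phi U_k^{-1}$ for all $j,k$. Therefore $E_{(r,A,r',\mathcal{U},p,q)}$ is simple if and only if the only matrices commuting with every element of $\{V_j,U_k\}$ are scalar multiples of $I_{r'}$, i.e. the family acts irreducibly on $\mathbb{C}^{r'}$.

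Next I would construct an irreducible family. Using (\ref{matAB}) I write $A=\mathcal{A}^{-1}D\mathcal{B}^{-1}$ with $D=\mathrm{diag}(\tilde{a_1},\dots,\tilde{a_s},0,\dots,0)$ and $\mathcal{A},\mathcal{B}\in GL(n;\mathbb{Z})$. For each $i=1,\dots,s$ the number $\zeta^{\tilde{a_i}}=e^{2\pi\mathbf{i}a_i'/r_i'}$ is a primitive $r_i'$-th root of unity because $gcd(r_i',a_i')=1$, so I can take a clock matrix $C_i$ and a shift matrix $S_i$ in $U(r_i')$, suitably normalized so that $S_iC_i=\zeta^{\tilde{a_i}}C_iS_i$. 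Placing $C_i$, $S_i$ in the $i$-th tensor factor of $\mathbb{C}^{r_1'}\otimes\cdots\otimes\mathbb{C}^{r_s'}$ (which has dimension $r'=r_1'\cdots r_s'$) defines unitaries $\hat{V}_i,\hat{U}_i$ for $i\leq s$, and I set $\hat{V}_i=\hat{U}_i=I_{r'}$ for $i>s$. These satisfy $\hat{V}_i\hat{V}_l=\hat{V}_l\hat{V}_i$, $\hat{U}_i\hat{U}_l=\hat{U}_l\hat{U}_i$, and $\hat{U}_i\hat{V}_l=\zeta^{D_{il}}\hat{V}_l\hat{U}_i$. I would then \emph{define} the required matrices by the monomials
\begin{equation*}
V_j:=\prod_{i=1}^{n}\hat{V}_i^{(\mathcal{B}^{-1})_{ij}},\qquad U_k:=\prod_{i=1}^{n}\hat{U}_i^{(\mathcal{A}^{-1})_{ki}},
\end{equation*}
which lie in $U(r')$ since $\mathcal{A},\mathcal{B}\in GL(n;\mathbb{Z})$. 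Moving every $\hat{U}_i$ past every $\hat{V}_l$ produces the phase $\zeta^{\sum_{i,l}(\mathcal{A}^{-1})_{ki}D_{il}(\mathcal{B}^{-1})_{lj}}=\zeta^{(\mathcal{A}^{-1}D\mathcal{B}^{-1})_{kj}}=\zeta^{a_{kj}}$, so all three cocycle relations $V_jV_k=V_kV_j$, $U_jU_k=U_kU_j$, $\zeta^{-a_{kj}}U_kV_j=V_jU_k$ hold, showing $\mathcal{U}\neq\emptyset$.

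Finally I would check that irreducibility is preserved. Because $\mathcal{A},\mathcal{B}\in GL(n;\mathbb{Z})$, inverting the monomial relations recovers $\hat{V}_i=\prod_jV_j^{\mathcal{B}_{ji}}$ and $\hat{U}_i=\prod_kU_k^{\mathcal{A}_{ik}}$, so the algebra generated by $\{V_j,U_k\}$ coincides with the one generated by $\{\hat{V}_i,\hat{U}_i\}$. Since each clock--shift pair generates the full matrix algebra $M(r_i';\mathbb{C})$, their tensor product generates $M(r';\mathbb{C})$, which acts irreducibly on $\mathbb{C}^{r'}$; hence the commutant is $\mathbb{C}\cdot I_{r'}$ and $E_{(r,A,r',\mathcal{U},p,q)}$ is simple. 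I expect the only genuinely delicate point to be the first step, namely justifying rigorously that holomorphic endomorphisms are precisely the constant commutant elements; once that is secured via the flat unitary structure of $\mathrm{End}(E_{(r,A,r',\mathcal{U},p,q)})$ and Liouville's theorem, the remaining content is the standard Weyl construction together with the linear-algebra bookkeeping that matches the commutation exponent to $a_{kj}$ through $\mathcal{A}$ and $\mathcal{B}$.
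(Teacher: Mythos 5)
Your proof is correct, and although it relies on the same underlying construction as the paper --- clock and shift matrices in $U(r_i')$ assembled as a tensor product over the elementary divisors of $A$ in (\ref{matAB}) --- it verifies simplicity by a genuinely different argument. The paper transports the whole problem to the torus $T^{2n}_{J=T'}$ with $T'=\mathcal{B}^{-1}T\mathcal{A}^t$ via the biholomorphism determined by $(\mathcal{A},\mathcal{B})$, works there with the ``diagonal'' set $\mathcal{U}'$, and computes $\mathrm{End}$ by hand: an endomorphism is expanded in a Fourier series compatible with the Cauchy--Riemann equation and the $Y_k$-monodromy, and the $X_j$-monodromy conditions together with the positive-definiteness of $\mathrm{Im}\,T'^{-1}$ and $\gcd(r_i',a_i')=1$ kill all off-diagonal blocks and identify the diagonal ones, yielding $\mathrm{End}\cong\mathbb{C}$. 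You instead stay on the original torus and make explicit, via the monomials with exponents drawn from $\mathcal{A}^{-1}$ and $\mathcal{B}^{-1}$, the ``suitable set $\mathcal{U}$'' that the paper only asserts exists as a pullback; you then replace the Fourier computation by the observation that the scalar factors $e^{\frac{\mathbf{i}}{r}a_jy}$ and the scalar connection form cancel under conjugation, so $\mathrm{End}(E_{(r,A,r',\mathcal{U},p,q)})$ is a flat unitary bundle whose holomorphic sections lift to bounded entire matrix functions, hence are constant by Liouville and lie in the commutant of $\{V_j,U_k\}$; irreducibility follows because the monomial relations are invertible over $\mathbb{Z}$ and each clock--shift pair generates the full algebra $M(r_i';\mathbb{C})$ (here your normalization is legitimate precisely because $\zeta^{\tilde{a_i}}=e^{2\pi\mathbf{i}a_i'/r_i'}$ is a primitive $r_i'$-th root of unity, again by $\gcd(r_i',a_i')=1$). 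Your route is shorter and more conceptual, cleanly separating the analytic input (Liouville) from the algebraic one (irreducibility of a Heisenberg-type family), and it has the side benefit of exhibiting $\mathcal{U}$ concretely on $T^{2n}_{J=T}$; the paper's computation is heavier but produces along the way the explicit description of the endomorphism algebra and the coordinate change that is reused in the proofs of Theorems \ref{EE'} and \ref{LL'}. Both arguments establish Proposition \ref{simplicity}.
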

\begin{proof}
For a given pair $(r,A)\in \mathbb{N}\times M(n;\mathbb{Z})$, we can take two matrices $\mathcal{A}$, $\mathcal{B}\in GL(n;\mathbb{Z})$ which satisfy the relation (\ref{matAB}). Then, note that $r':=r_1'\cdots r_s'\in \mathbb{N}$ is uniquely defined in the sense of the relation (\ref{r'}). We fix such matrices $\mathcal{A}$, $\mathcal{B}\in GL(n;\mathbb{Z})$, and set
\begin{equation*}
T':=\mathcal{B}^{-1} T \mathcal{A}^t.
\end{equation*}
By using this $T'$, we can consider the complex torus $T^{2n}_{J=T'}=\mathbb{C}^n/2\pi (\mathbb{Z}^n\oplus T'\mathbb{Z}^n)$, and we locally express the complex coordinates $Z:=(Z_1,\cdots, Z_n)^t$ of $T^{2n}_{J=T'}$ by $Z=X+T'Y$, where $X:=(X_1,\cdots, X_n)^t$, $Y:=(Y_1,\cdots, Y_n)^t$. In particular, the complex torus $T^{2n}_{J=T'}$ is biholomorphic to the complex torus $T^{2n}_{J=T}$, and the biholomorphic map
\begin{equation*}
\varphi : T^{2n}_{J=T} \stackrel{\sim }{\rightarrow } T^{2n}_{J=T'}
\end{equation*}
is actually given by
\begin{equation*}
\varphi (z)=\mathcal{B}^{-1}z.
\end{equation*}
Furthermore, when we regard the complex manifolds $T^{2n}_{J=T}$ and $T^{2n}_{J=T'}$ as the real differentiable manifolds $\mathbb{R}^{2n}/2\pi \mathbb{Z}^{2n}$, the biholomorphic map $\varphi $ is regarded as the diffeomorphism
\begin{equation*}
\left( \begin{array}{ccc} X \\ Y \end{array} \right)=\varphi \left( \begin{array}{ccc} x \\ y \end{array} \right) = \left( \begin{array}{ccc} \mathcal{B}^{-1} & O \\ O & (\mathcal{A}^{-1})^t \end{array} \right) \left( \begin{array}{ccc} x \\ y \end{array} \right).
\end{equation*}

Now, we define a set $\mathcal{U}'\not=\emptyset $ as follows. First, we set
\begin{equation*}
\tilde{V}_i:=\left( \begin{array}{cccc} 0\ \ 1&&\\&\ddots\ddots&\\&&1\\1&&0 \end{array} \right)\in U(r_i'), \ \tilde{U}_i:=\left( \begin{array}{cccc} 1&&&\\&\zeta _i&&\\&&\ddots&\\&&&\zeta _i ^{r_i'-1} \end{array} \right)\in U(r_i'), 
\end{equation*}
where $\zeta _i:=e^{\frac{2\pi \mathbf{i}}{r_i'}}$, $i=1,\cdots, s$. By using these matrices $\tilde{V}_i$, $\tilde{U}_i$, we define
\begin{align*}
&V_1':=\tilde{V}_1 \otimes I_{r_2'} \otimes \cdots \otimes I_{r_s'} \in U(r'), \\
&V_2':=I_{r_1'} \otimes \tilde{V}_2 \otimes \cdots \otimes I_{r_s'} \in U(r'), \\
& \hspace{30mm} \vdots \\
&V_s':=I_{r_1'} \otimes \cdots \otimes I_{r_{s-1}'} \otimes \tilde{V}_s \in U(r'), \\
&V_{s+1}':=I_{r_1'} \otimes \cdots \otimes I_{r_s'} \in U(r'), \\
& \hspace{30mm} \vdots \\
&V_n':=I_{r_1'} \otimes \cdots \otimes I_{r_s'} \in U(r'), \\
&U_1':=\tilde{U}_1^{-a_1'} \otimes I_{r_2'} \otimes \cdots \otimes I_{r_s'} \in U(r'), \\
&U_2':=I_{r_1'} \otimes \tilde{U}_2^{-a_2'} \otimes \cdots \otimes I_{r_s'} \in U(r'), \\
& \hspace{30mm} \vdots \\
&U_s':=I_{r_1'} \otimes \cdots \otimes I_{r_{s-1}'} \otimes \tilde{U}_s^{-a_s'} \in U(r'), \\
&U_{s+1}':=I_{r_1'} \otimes \cdots \otimes I_{r_s'} \in U(r'), \\
& \hspace{30mm} \vdots \\
&U_n':=I_{r_1'} \otimes \cdots \otimes I_{r_s'} \in U(r'),
\end{align*}
and set
\begin{equation*}
\mathcal{U}':= \Bigl\{ V_1', V_2', \cdots, V_s', V_{s+1}', \cdots, V_n', U_1', U_2', \cdots, U_s', U_{s+1}', \cdots, U_n' \in U(r') \Bigr\} \not=\emptyset .
\end{equation*}
Then, we can construct the holomorphic vector bundle
\begin{equation*}
E_{(r,\mathcal{A}A\mathcal{B},r',\mathcal{U}',\mathcal{A}p,\mathcal{B}^tq)}\rightarrow T^{2n}_{J=T'},
\end{equation*}
and in particular, $V_j'$ and $U_k'$ are used in the definition of the transition functions of $E_{(r,\mathcal{A}A\mathcal{B},r',\mathcal{U}',\mathcal{A}p,\mathcal{B}^tq)}$ in the $X_j$ and $Y_k$ directions, respectively ($j$, $k=1,\cdots, n$). For this holomorphic vector bundle $E_{(r,\mathcal{A}A\mathcal{B},r',\mathcal{U}',\mathcal{A}p,\mathcal{B}^tq)}$, we can consider the pullback bundle $\varphi ^*E_{(r,\mathcal{A}A\mathcal{B},r',\mathcal{U}',\mathcal{A}p,\mathcal{B}^tq)}$ by the biholomorphic map $\varphi $, and we can regard the pullback bundle $\varphi ^*E_{(r,\mathcal{A}A\mathcal{B},r',\mathcal{U}',\mathcal{A}p,\mathcal{B}^tq)}$ as the holomorphic vector bundle
\begin{equation*}
E_{(r,A,r',\mathcal{U},p,q)}\rightarrow T^{2n}_{J=T}
\end{equation*}
by using a suitable set $\mathcal{U}$ which is defined by employing the data $(\mathcal{U}',\mathcal{A},\mathcal{B})$. In particular, since we can check $\mathcal{U}\not=\emptyset $ easily, in order to prove the statement of this proposition, it is enough to prove that $E_{(r,\mathcal{A}A\mathcal{B},r',\mathcal{U}',\mathcal{A}p,\mathcal{B}^tq)}$ is simple.

Let
\begin{equation*}
\varphi ^{r_1'\cdots r_s'}
\end{equation*}
be an element in $\mathrm{End}(E_{(r,\mathcal{A}A\mathcal{B},r',\mathcal{U}',\mathcal{A}p,\mathcal{B}^tq)})$. Since the rank of $E_{(r,\mathcal{A}A\mathcal{B},r',\mathcal{U}',\mathcal{A}p,\mathcal{B}^tq)}$ is $r'=r_1'\cdots r_s'$, we can treat $\varphi ^{r_1'\cdots r_s'}$ as a matrix of order $r'$. Then, we can divide $\varphi ^{r_1'\cdots r_s'}$ as follows. 
\begin{equation*}
\varphi ^{r_1'\cdots r_s'}=\left( \begin{array}{cccc} \varphi _{11}^{r_2'\cdots r_s'} & \cdots & \varphi _{1r_1'}^{r_2'\cdots r_s'} \\ \vdots & \ddots & \vdots \\ \varphi _{r_1'1}^{r_2'\cdots r_s'} & \cdots & \varphi _{r_1'r_1'}^{r_2'\cdots r_s'} \end{array} \right) = \left( \varphi _{k_1l_1}^{r_2'\cdots r_s'} \right)_{1\leq k_1, l_1 \leq r_1'}.
\end{equation*}
Here, each $\varphi _{k_1l_1}^{r_2'\cdots r_s'}$ is a matrix of order $r_2'\cdots r_s'$. Similarly, we can divide each $\varphi _{k_1l_1}^{r_2'\cdots r_s'}$ as follows.
\begin{equation*}
\varphi _{k_1l_1}^{r_2'\cdots r_s'}=\left( \begin{array}{cccc} \left( \varphi _{k_1l_1}^{r_2'\cdots r_s'} \right)_{11}^{r_3'\cdots r_s'} & \cdots & \left( \varphi _{k_1l_1}^{r_2'\cdots r_s'} \right)_{1r_2'}^{r_3'\cdots r_s'} \\ \vdots & \ddots & \vdots \\ \left( \varphi _{k_1l_1}^{r_2'\cdots r_s'} \right)_{r_2'1}^{r_3'\cdots r_s'} & \cdots & \left( \varphi _{k_1l_1}^{r_2'\cdots r_s'} \right)_{r_2'r_2'}^{r_3'\cdots r_s'} \end{array} \right) = \left( \left( \varphi _{k_1l_1}^{r_2'\cdots r_s'} \right)_{k_2l_2}^{r_3'\cdots r_s'} \right)_{1\leq k_2, l_2 \leq r_2'}.
\end{equation*}
Here, each $\left( \varphi _{k_1l_1}^{r_2'\cdots r_s'} \right)_{k_2l_2}^{r_3'\cdots r_s'}$ is matrix of order $r_3'\cdots r_s'$. By repeating the above steps, as a result, we can express $\varphi ^{r_1'\cdots r_s'}$ as
\begin{equation*}
\left( \left( \left( \cdots \left( \left( \varphi _{k_1l_1}^{r_2'\cdots r_s'} \right)_{k_2l_2}^{r_3'\cdots r_s'} \right) \cdots \right)_{k_{s-1}l_{s-1}}^{r_s'} \right)_{k_sl_s} \right),
\end{equation*}
where $1\leq k_i, l_i\leq r_i'$ ($i=1,\cdots, s$). Hereafter, we consider the local expression of each component
\begin{equation}
\left( \left( \cdots \left( \left( \varphi _{k_1l_1}^{r_2'\cdots r_s'} \right)_{k_2l_2}^{r_3'\cdots r_s'} \right) \cdots \right)_{k_{s-1}l_{s-1}}^{r_s'} \right)_{k_sl_s} \label{component}
\end{equation}
of $\varphi ^{r_1'\cdots r_s'}$. First, for each $k=1,\cdots, n$, by considering the transition functions of $E_{(r,\mathcal{A}A\mathcal{B},r',\mathcal{U}',\mathcal{A}p,\mathcal{B}^tq)}$ in the $Y_k$ direction, we see that the morphism $\varphi ^{r_1'\cdots r_s'}$ must satisfy 
\begin{align}
&U_k'\cdot \varphi ^{r_1'\cdots, r_s'}(X_1,\cdots, X_n, Y_1,\cdots, Y_n) \notag \\
&= \varphi ^{r_1'\cdots r_s'} (X_1,\cdots, X_n, Y_1,\cdots, Y_k+2\pi ,\cdots, Y_n) \cdot U_k'. \label{Y}
\end{align}
Furthermore, the morphism $\varphi ^{r_1'\cdots r_s'}$ need to satisfy not only the relation (\ref{Y}) but also the Cauchy-Riemann equation
\begin{equation}
\bar{\partial } (\varphi ^{r_1'\cdots r_s'})=O. \label{C-R}
\end{equation}
Therefore, by the relations (\ref{Y}) and (\ref{C-R}), we can give a local expression of the component (\ref{component}) of $\varphi ^{r_1'\cdots, r_s'}$ as follows.
\begin{align*}
\sum_{I_{k_1l_1}^1,\cdots, I_{k_sl_s}^s, I^{s+1},\cdots, I^n \in \mathbb{Z}} & \lambda _{I_{k_1l_1}^1,\cdots, I_{k_sl_s}^s, I^{s+1},\cdots, I^n}^{k_1l_1,\cdots, k_sl_s} \\
&e^{\mathbf{i}\left( \frac{a_1'}{r_1'}(l_1-k_1)+I_{k_1l_1}^1,\cdots, \frac{a_s'}{r_s'}(l_s-k_s)+I_{k_sl_s}^s, I^{s+1},\cdots, I^n \right) T^{-1}(X+TY)}.
\end{align*}
Here, 
\begin{equation*}
\lambda _{I_{k_1l_1}^1,\cdots, I_{k_sl_s}^s, I^{s+1},\cdots, I^n}^{k_1l_1,\cdots, k_sl_s} \in \mathbb{C}
\end{equation*}
is an arbitrary constant. Finally, for each $j=1,\cdots, n$, we consider the conditions on the transition functions of $E_{(r,\mathcal{A}A\mathcal{B},r',\mathcal{U}',\mathcal{A}p,\mathcal{B}^tq)}$ in the $X_j$ direction. By a direct calculation, if $j=1,\cdots, s$, we obtain 
\begin{align}
&\left( \cdots \left( \varphi _{k_1l_1}^{r_2'\cdots r_s'} \right) \cdots \right)_{k_jl_j}^{r_{j+1}'\cdots r_s'}(X_1,\cdots, X_j+2\pi ,\cdots, X_n, Y_1,\cdots, Y_n) \notag \\
&=\left( \cdots \left( \varphi _{k_1l_1}^{r_2'\cdots r_s'} \right) \cdots \right)_{(k_j+1)(l_j+1)}^{r_{j+1}'\cdots r_s'}(X_1,\cdots, X_n, Y_1,\cdots, Y_n), \label{X1}
\end{align}
and if $j=s+1,\cdots, n$, we obtain 
\begin{align}
&\left( \cdots \left( \varphi _{k_1l_1}^{r_2'\cdots r_s'} \right) \cdots \right)_{k_sl_s}(X_1,\cdots, X_j+2\pi ,\cdots, X_n,Y_1,\cdots, Y_n) \notag \\
&=\left( \cdots \left( \varphi _{k_1l_1}^{r_2'\cdots r_s'} \right) \cdots \right)_{k_sl_s}(X_1,\cdots, X_n,Y_1,\cdots, Y_n). \label{X2}
\end{align}
In particular, the relation (\ref{X1}) implies 
\begin{align}
&\left( \cdots \left( \varphi _{k_1l_1}^{r_2'\cdots r_s'} \right) \cdots \right)_{k_sl_s}(X_1,\cdots, X_j+2\pi r_j',\cdots, X_n,Y_1,\cdots, Y_n) \notag \\
&=\left( \cdots \left( \varphi _{k_1l_1}^{r_2'\cdots r_s'} \right) \cdots \right)_{k_sl_s}(X_1,\cdots, X_n,Y_1,\cdots, Y_n), \label{X3}
\end{align}
and by using the relations (\ref{X2}) and (\ref{X3}), we have the condition
\begin{align}
&\left( \frac{a_1'}{r_1'}(l_1-k_1)+I_{k_1l_1}^1,\cdots, \frac{a_s'}{r_s'}(l_s-k_s)+I_{k_sl_s}^s,I^{s+1},\cdots, I^n \right) T'^{-1} \notag \\
&\in \frac{\mathbb{Z}}{r_1'}\times \cdots \frac{\mathbb{Z}}{r_s'}\times \mathbb{Z}\times \cdots \mathbb{Z} \subset \mathbb{R}^n. \label{X4}
\end{align}
Now, since $\mathrm{Im}T'^{-1}$ is positive definite, the condition (\ref{X4}) turns out to be
\begin{equation}
\left( \frac{a_1'}{r_1'}(l_1-k_1)+I_{k_1l_1}^1,\cdots, \frac{a_s'}{r_s'}(l_s-k_s)+I_{k_sl_s}^s,I^{s+1},\cdots, I^n \right) =0. \label{X5}
\end{equation}
Here, recall the relation (\ref{X1}). By the relation (\ref{X1}), it is enough to consider the condition (\ref{X5}) in the case $k_1=\cdots =k_s=1$. We focus on the first component in the condition (\ref{X5}), i.e.,
\begin{equation}
\frac{a_1'}{r_1'}(l_1-1)+I_{1l_1}^1=0. \label{X6}
\end{equation}
In the case $l_1=1$, the condition (\ref{X6}) turns out to be $I_{11}^1=0$, so by the relation (\ref{X1}), we see
\begin{equation*}
\varphi _{11}^{r_2'\cdots r_s'}=\varphi _{22}^{r_2'\cdots r_s'}=\cdots =\varphi _{r_1'r_1'}^{r_2'\cdots r_s'} \in M(r_2'\cdots r_s';\mathbb{C}).
\end{equation*}
We consider the cases $l_1=2,\cdots, r_1'$. Note that $gcd(r_1',a_1')=1$ holds by the assumption. Therefore, we have
\begin{equation*}
l_1-1\in r_1'\mathbb{Z}
\end{equation*}
by the condition (\ref{X6}). However, this fact contradicts the assumption $l_1=2,\cdots, r_1'$. Thus, for each $l_1=2,\cdots, r_1'$, we obtain
\begin{equation*}
\varphi _{1l_1}^{r_2'\cdots r_s'}=O,
\end{equation*}
and by using the relation (\ref{X1}) again, we also obtain
\begin{equation*}
\varphi _{2(l_1+1)}^{r_2'\cdots r_s'}=\varphi _{3(l_1+2)}^{r_2'\cdots r_s'}=\cdots \varphi _{r_1'(l_1-1)}^{r_2'\cdots r_s'}=O.
\end{equation*}
Similarly, by focusing on the second component in the condition (\ref{X5}), we see
\begin{equation*}
\left( \varphi _{11}^{r_2'\cdots r_s'} \right)_{11}^{r_3'\cdots r_s'}=\left( \varphi _{11}^{r_2'\cdots r_s'} \right)_{22}^{r_3'\cdots r_s'}=\cdots =\left( \varphi _{11}^{r_2'\cdots r_s'} \right)_{r_2'r_2'}^{r_3'\cdots r_s'} \in M(r_3'\cdots r_s';\mathbb{C}),
\end{equation*}
and the other components $\left( \varphi _{11}^{r_2'\cdots r_s'} \right)_{k_2l_2}^{r_3'\cdots r_s'}$ of the matrix $\varphi _{11}^{r_2'\cdots r_s'}$ vanish. By repeating the above discussions, as a result, we have
\begin{equation*}
\varphi ^{r_1'\cdots r_s'}=\left( \left( \cdots \left( \left( \varphi _{11}^{r_2'\cdots r_s'} \right)_{11}^{r_3'\cdots r_s'} \right) \cdots \right)_{11}^{r_s'} \right)_{11} \cdot I_{r'},
\end{equation*}
where
\begin{equation*}
\left( \left( \cdots \left( \left( \varphi _{11}^{r_2'\cdots r_s'} \right)_{11}^{r_3'\cdots r_s'} \right) \cdots \right)_{11}^{r_s'} \right)_{11} \in \mathbb{C}.
\end{equation*}
Thus, we can conclude
\begin{equation*}
\mathrm{End}(E_{(r,\mathcal{A}A\mathcal{B},r',\mathcal{U}',\mathcal{A}p,\mathcal{B}^tq)})\cong \mathbb{C}.
\end{equation*}
\end{proof}

We define a DG-category 
\begin{equation*}
DG_{T^{2n}_{J=T}}
\end{equation*}
consisting of holomorphic vector bundles $(E_{(r,A,r',\mathcal{U},p,q)},\nabla_{(r,A,r',\mathcal{U},p,q)})$. This definition is an extension of the case of a pair $(T^{2}_{J=T},\check{T}^{2}_{J=T})$ of elliptic curves in \cite{kazushi} (see section 3) to the higher dimensional case, and it is also written in section 2 of \cite{kazushi2}. The objects of $DG_{T^{2n}_{J=T}}$ are holomorphic vector bundles $E_{(r,A,r',\mathcal{U},p,q)}$ with $U(r')$-connections $\nabla_{(r,A,r',\mathcal{U},p,q)}$. Of course, we assume $AT=(AT)^t$. Sometimes we simply denote $(E_{(r,A,\mu ,\mathcal{U})},\nabla_{(r,A,\mu ,\mathcal{U})})$ by $E_{(r,A,r',\mathcal{U},p,q)}$. For any two objects 
\begin{align*}
&E_{(r,A,r',\mathcal{U},p,q)}=(E_{(r,A,r',\mathcal{U},p,q)},\nabla_{(r,A,r',\mathcal{U},p,q)}), \\
&E_{(s,B,s',\mathcal{V},u,v)}=(E_{(s,B,s',\mathcal{V},u,v)},\nabla_{(s,B,s',\mathcal{V},u,v)}), 
\end{align*}
the space of morphisms is defined by
\begin{align*}
&\mathrm{Hom}_{DG_{T^{2n}_{J=T}}}(E_{(r,A,r',\mathcal{U},p,q)},E_{(s,B,s',\mathcal{V},u,v)}) \\
&:=\Gamma (E_{(r,A,r',\mathcal{U},p,q)},E_{(s,B,s',\mathcal{V},u,v)})\bigotimes _{C^{\infty }(T^{2n}_{J=T})}\Omega ^{0,*}(T^{2n}_{J=T}),
\end{align*}
where $\Omega ^{0,*}(T^{2n}_{J=T})$ is the space of anti-holomorphic differential forms, and 
\begin{equation*}
\Gamma (E_{(r,A,r',\mathcal{U},p,q)},E_{(s,B,s',\mathcal{V},u,v)})
\end{equation*}
is the space of homomorphisms from $E_{(r,A,r',\mathcal{U},p,q)}$ to $E_{(s,B,s',\mathcal{V},u,v)}$. The space of morphisms $\mathrm{Hom}_{DG_{T^{2n}_{J=T}}}(E_{(r,A,r',\mathcal{U},p,q)},E_{(s,B,s',\mathcal{V},u,v)})$ is a $\mathbb{Z}$-graded vector space, where the grading is defined as the degree of the anti-holomorphic differential forms. In particular, the degree $r$ part is denoted 
\begin{equation*}
\mathrm{Hom}^r_{DG_{T^{2n}_{J=T}}}(E_{(r,A,r',\mathcal{U},p,q)},E_{(s,B,s',\mathcal{V},u,v)}). 
\end{equation*}
We decompose $\nabla_{(r,A,r',\mathcal{U},p,q)}$ into its holomorphic part and anti-holomorphic part $\nabla_{(r,A,r',\mathcal{U},p,q)}=\nabla ^{(1,0)}_{(r,A,r',\mathcal{U},p,q)}+\nabla ^{(0,1)}_{(r,A,r',\mathcal{U},p,q)}$, and define a linear map
\begin{equation*}
\mathrm{Hom}^r_{DG_{T^{2n}_{J=T}}}(E_{(r,A,r',\mathcal{U},p,q)},E_{(s,B,s',\mathcal{V},u,v)})\rightarrow \mathrm{Hom}^{r+1}_{DG_{T^{2n}_{J=T}}}(E_{(r,A,r',\mathcal{U},p,q)},E_{(s,B,s',\mathcal{V},u,v)})
\end{equation*}
by
\begin{equation*}
\psi \mapsto ( 2\nabla^{(0,1)}_{(s,B,s',\mathcal{V},u,v)} )(\psi )-(-1)^r\psi  (2\nabla ^{(0,1)}_{(r,A,r',\mathcal{U},p,q)} ).
\end{equation*}
We can check that this linear map is a differential. Furthermore, the product structure is defined by the composition of homomorphisms of vector bundles together with the wedge product for the anti-holomorphic differential forms. Then, these differential and product structure satisfy the Leibniz rule. Thus, $DG_{T^{2n}_{J=T}}$ forms a DG-category.

\subsection{The isomorphism classes of $E_{(r,A,r',\mathcal{U},p,q)}$}
In this subsection, we fix $r\in \mathbb{N}$, $A\in M(n;\mathbb{Z})$, and consider the condition such that $E_{(r,A,r',\mathcal{U},p,q)}\cong E_{(r,A,r',\mathcal{U}',p',q')}$ holds. Here, 
\begin{equation*}
p, q, p', q'\in \mathbb{R}^n
\end{equation*}
and
\begin{align*}
&\mathcal{U}:= \Bigl \{ V_j , U_k \in U(r') \ | \ V_j V_k=V_k V_j,\ U_j U_k=U_k U_j,\ \zeta ^{-a_{kj}}U_k V_j=V_j U_k, \\
& \hspace{42mm} j,k=1,\cdots ,n \Bigr \}, \\
&\mathcal{U}':= \Bigl \{ V'_j , U'_k \in U(r') \ | \ V'_j V'_k=V'_k V'_j,\ U'_j U'_k=U'_k U'_j,\ \zeta ^{-a_{kj}}U'_k V'_j=V'_j U'_k, \\
& \hspace{42mm} j,k=1,\cdots ,n \Bigr \}. 
\end{align*}
Furthermore, for each $j=1,\cdots,n$, we define $\xi _j$, $\theta _j$, $\xi '_j$, $\theta '_j \in \mathbb{R}$ by
\begin{equation*}
e^{\mathbf{i}\xi _j}=\mathrm{det}V_j,\ e^{\mathbf{i}\theta _j}=\mathrm{det}U_j,\ e^{\mathbf{i}\xi '_j}=\mathrm{det}V'_j,\ e^{\mathbf{i}\theta '_j}=\mathrm{det}U'_j,
\end{equation*}
and set
\begin{equation}
\xi :=(\xi _1,\cdots,\xi _n)^t,\ \theta :=(\theta _1,\cdots,\theta _n)^t,\ \xi ':=(\xi '_1,\cdots,\xi '_n)^t,\ \theta ':=(\theta '_1,\cdots,\theta '_n)^t\in \mathbb{R}^n. \label{xitheta}
\end{equation}

Now, in order to consider the condition such that $E_{(r,A,r',\mathcal{U},p,q)}\cong E_{(r,A,r',\mathcal{U}',p',q')}$ holds, we recall the following classification result of simple projectively flat bundles on complex tori by Matsushima and Mukai (see \cite[Theorem 6.1]{matsu}, \cite[Proposition 6.17 (1)]{mukai}, and note that the notion of semi-homogeneous vector bundles in \cite{mukai} is equivalent to the notion of projectively flat bundles).
\begin{theo}[Matsushima \cite{matsu}, Mukai \cite{mukai}] \label{theoremMM}
For two simple projectively flat bundles $E$, $E'$ over a complex torus $\mathbb{C}^n/\Gamma $ $(\Gamma \subset \mathbb{C}^n$ is a lattice$)$ which satisfy $(\mathrm{rank}E, c_1(E))=(\mathrm{rank}E', c_1(E'))$, there exists a line bundle $L\in \mathrm{Pic}^0(\mathbb{C}^n/\Gamma )$ such that
\begin{equation*}
E'\cong E\otimes L.
\end{equation*}
\end{theo}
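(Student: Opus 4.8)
The plan is to exploit the fact that a projectively flat bundle of constant central curvature is Hermite--Einstein, hence polystable, and that simplicity upgrades this to stability; then a nonzero homomorphism between two stable bundles of the same rank and slope must be an isomorphism. So the whole problem reduces to producing such a homomorphism after twisting by a suitable $L\in\mathrm{Pic}^0(\mathbb{C}^n/\Gamma)$. First I would record that the full Chern character of a simple projectively flat bundle $E$ of rank $r$ is forced by $(\mathrm{rank}E,c_1(E))$: since $E$ admits a Hermitian connection whose curvature is $\frac{1}{r}\Omega\cdot\mathrm{id}$ for a scalar $(1,1)$-form $\Omega$, one has $\mathrm{ch}(E)=r\,e^{c_1(E)/r}$. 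In particular $E$ and $E'$ share the same Chern character, hence the same slope; and by the easy direction of the Kobayashi--Hitchin correspondence (a bundle admitting an Einstein--Hermitian connection is polystable) together with simplicity, both $E$ and $E'$ are stable.

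Next I would analyze $\mathcal{H}om(E,E')=E^\vee\otimes E'$. Choosing the harmonic representative of $c_1(E)=c_1(E')$ in the fixed K\"ahler class makes the central curvatures of $E$ and $E'$ literally equal, so the induced connection on $E^\vee\otimes E'$ has curvature $\frac{1}{r}(\Omega'-\Omega)\cdot\mathrm{id}=0$; thus $E^\vee\otimes E'$ is a flat unitary bundle. Its holonomy is a unitary representation of the abelian group $\pi_1(\mathbb{C}^n/\Gamma)$, hence simultaneously diagonalizable, so $E^\vee\otimes E'\cong\bigoplus_i L_i$ with each $L_i\in\mathrm{Pic}^0(\mathbb{C}^n/\Gamma)$. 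Now I would pick one summand $L_{i_0}$ and set $L:=L_{i_0}^{-1}\in\mathrm{Pic}^0(\mathbb{C}^n/\Gamma)$. Then $\mathrm{Hom}(E,E'\otimes L)=H^0\big(\bigoplus_i L_i\otimes L\big)$ contains $H^0(\mathcal{O})=\mathbb{C}$, so there is a nonzero homomorphism $\phi\colon E\to E'\otimes L$.

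Finally, since $L\in\mathrm{Pic}^0(\mathbb{C}^n/\Gamma)$, the bundle $E'\otimes L$ is again stable with the same rank, slope, and Chern character as $E$; a nonzero morphism between stable bundles of the same rank and slope is an isomorphism (its image is simultaneously a quotient of $E$ and a subsheaf of $E'\otimes L$, so stability forces $\phi$ to be injective and surjective, and equal ranks and degrees then force it to be an isomorphism). Hence $E\cong E'\otimes L$, i.e. $E'\cong E\otimes L^{-1}$ with $L^{-1}\in\mathrm{Pic}^0(\mathbb{C}^n/\Gamma)$, as claimed.

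The substantive inputs, and the steps I expect to be the main obstacle to making fully rigorous, are the cancellation of the central curvatures (which requires the uniqueness of the harmonic representative so that one may genuinely take $\Omega=\Omega'$) and the passage from the Einstein--Hermitian condition to stability. An alternative, more algebraic route would instead run through Mukai's theory of semi-homogeneous bundles: one computes $H^1(\mathbb{C}^n/\Gamma,\mathcal{E}nd E)\cong H^1(\mathbb{C}^n/\Gamma,\mathcal{O})\cong\mathbb{C}^n$ using that $\mathcal{E}nd E$ is flat and $E$ simple, which shows the orbit map $\mathrm{Pic}^0(\mathbb{C}^n/\Gamma)\to M$, $L\mapsto E\otimes L$, into the moduli of simple sheaves is \'etale onto an open subset; there the obstacle becomes the irreducibility of the relevant moduli component, so that the two open orbits of $E$ and $E'$ are forced to coincide.
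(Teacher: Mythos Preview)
The paper does not prove this theorem at all: it is quoted as a classical result of Matsushima and Mukai and then used as a black box in the proof of Theorem~\ref{EE'}. So there is no ``paper's own proof'' to compare against; what you have written is genuinely additional content.

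Your argument is correct. The key insight---that once the constant central curvatures are normalized to the unique harmonic representative of $c_1(E)/r=c_1(E')/r$, the bundle $E^\vee\otimes E'$ becomes a \emph{flat} unitary bundle on a manifold with abelian fundamental group and therefore splits holomorphically as $\bigoplus_i L_i$ with $L_i\in\mathrm{Pic}^0$---is exactly what makes the problem tractable, and your use of stability to promote a nonzero section of one summand to an isomorphism is the standard endgame. The two caveats you flag (existence of the constant-curvature Hermitian connection within the given holomorphic isomorphism class, and the passage from Hermite--Einstein plus simple to stable) are both covered by the Kobayashi--Hitchin correspondence on a flat K\"ahler torus, so they are not genuine gaps.

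For context, neither Matsushima nor Mukai argues this way. Matsushima works with factors of automorphy and irreducible representations of a Heisenberg-type group attached to the Appell--Humbert data of $(r,c_1)$; two simple projectively flat bundles with the same $(r,c_1)$ come from the same irreducible projective representation up to a one-dimensional twist, which is the line bundle $L$. Mukai's route is the one you sketch at the end: he shows a simple semi-homogeneous sheaf has $\dim\mathrm{Ext}^1(E,E)=n$, so the orbit map from $\mathrm{Pic}^0$ (or the full $\hat X\times X$) is \'etale onto an open piece of the moduli, and then uses irreducibility of the moduli component. Your flatness-of-$E^\vee\otimes E'$ argument is more differential-geometric and arguably the cleanest of the three for the torus case, since it bypasses both the representation theory and the moduli-theoretic input.
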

By using Theorem \ref{theoremMM}, we obtain the following theorem.
\begin{theo} \label{EE'}
Two holomorphic vector bundles $E_{(r,A,r',\mathcal{U},p,q)}$, $E_{(r,A,r',\mathcal{U}',p',q')}$ are isomorphic to each other, 
\begin{equation*}
E_{(r,A,r',\mathcal{U},p,q)}\cong E_{(r,A,r',\mathcal{U}',p',q')},
\end{equation*}
if and only if 
\begin{equation*}
(p+T^t q)-(p'+T^t q') \equiv \frac{r}{r'}(\theta -\theta ')+T^t \frac{r}{r'}(\xi '-\xi ) \ ( \mathrm{mod} \ 2\pi r ( \mathcal{A}^{-1} \left( \begin{array}{cccc} \frac{\mathbb{Z}}{r_1'} \\ \vdots \\ \frac{\mathbb{Z}}{r_s'} \\ \mathbb{Z} \\ \vdots \\ \mathbb{Z} \end{array} \right) \oplus T^t (\mathcal{B}^{-1})^t \left( \begin{array}{cccc} \frac{\mathbb{Z}}{r_1'} \\ \vdots \\ \frac{\mathbb{Z}}{r_s'} \\ \mathbb{Z} \\ \vdots \\ \mathbb{Z} \end{array} \right) ) )
\end{equation*}
holds.
\end{theo}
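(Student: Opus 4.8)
The plan is to reduce the problem to the action of $\mathrm{Pic}^0(T^{2n}_{J=T})$ by tensor product, to identify the relevant continuous invariant, and then to pin down the resulting discrete ambiguity (a lattice) explicitly.

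First I would verify that Theorem \ref{theoremMM} applies to the pair $E_{(r,A,r',\mathcal{U},p,q)}$, $E_{(r,A,r',\mathcal{U}',p',q')}$. Both are simple by Proposition \ref{simplicity}, and the curvature formula (\ref{curvature}) depends only on $(r,A,r')$; hence they are projectively flat with the same rank $r'$ and the same first Chern class. By Theorem \ref{theoremMM} any two such bundles therefore differ by tensoring with a line bundle $L\in\mathrm{Pic}^0(T^{2n}_{J=T})$, and consequently $E_{(r,A,r',\mathcal{U},p,q)}\cong E_{(r,A,r',\mathcal{U}',p',q')}$ holds if and only if the $L$ relating them lies in the stabilizer $\mathrm{Stab}(E):=\{L\in\mathrm{Pic}^0\mid E_{(r,A,r',\mathcal{U},p,q)}\otimes L\cong E_{(r,A,r',\mathcal{U},p,q)}\}$. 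Thus the whole statement is governed by the tensor action on the parameters together with the lattice $\mathrm{Stab}(E)$.

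Second, I would realize every $L\in\mathrm{Pic}^0$ in the same notation as a rank-one bundle $L=E_{(1,0,1,\mathcal{U}_L,p_L,q_L)}$ with $\mathcal{U}_L=\{e^{\mathbf{i}\alpha_j},e^{\mathbf{i}\beta_k}\}$, $\alpha,\beta\in\mathbb{R}^n$, and compute the tensor product on transition functions and connections. Since $V_j$ becomes $e^{\mathbf{i}\alpha_j}V_j$ and $U_k$ becomes $e^{\mathbf{i}\beta_k}U_k$ (one checks at once that the cocycle relations defining $\mathcal{U}$ are preserved), and since the connection forms add, the effect on the data is $\xi\mapsto\xi+r'\alpha$, $\theta\mapsto\theta+r'\beta$, $p\mapsto p+rp_L$, $q\mapsto q+rq_L$, with $\xi,\theta$ as in (\ref{xitheta}). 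Introducing the combination $\Phi(\mathcal{U},p,q):=(p+T^tq)-\tfrac{r}{r'}\theta+T^t\tfrac{r}{r'}\xi$, direct substitution gives $\Phi(E\otimes L)-\Phi(E)=r\big((p_L-\beta)+T^t(q_L+\alpha)\big)=:\kappa(L)$, which ranges over all of $\mathbb{C}^n$ as $L$ varies (here $\mathrm{Im}\,T$ positive definite is used). Since $\Phi'-\Phi\in\Lambda$ is literally equivalent to the congruence in the statement, combining this with the first step shows that $E\cong E'$ if and only if $\Phi(\mathcal{U}',p',q')-\Phi(\mathcal{U},p,q)$ lies in $\Lambda:=\kappa(\mathrm{Stab}(E))$.

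Third, the remaining and principal task is to compute $\Lambda$. For this I would use the normalization of Proposition \ref{simplicity}: after the biholomorphism $\varphi(z)=\mathcal{B}^{-1}z$ and the elementary-divisor form (\ref{matAB}), the bundle becomes $E_{(r,\mathcal{A}A\mathcal{B},r',\mathcal{U}',\mathcal{A}p,\mathcal{B}^tq)}$ with the explicit clock–shift matrices $\tilde{V}_i,\tilde{U}_i$. For these generators I would repeat the Fourier-expansion analysis from the proof of Proposition \ref{simplicity} — the periodicity conditions (\ref{Y}), the Cauchy–Riemann equation (\ref{C-R}), and the resulting conditions (\ref{X1})–(\ref{X6}) — but now applied to $\mathrm{Hom}(E,E\otimes L)$ instead of $\mathrm{End}(E)$, in order to detect exactly when a nonzero morphism exists (such a nonzero morphism is automatically an isomorphism, the bundles being stable of the same slope). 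The clock–shift relations force the admissible holonomies of $L$ into the fractional lattice $\Lambda_0:=\tfrac{\mathbb{Z}}{r_1'}\oplus\cdots\oplus\tfrac{\mathbb{Z}}{r_s'}\oplus\mathbb{Z}^{\,n-s}$, and transporting this back through $\varphi$ and the integer matrices $\mathcal{A},\mathcal{B}$ yields $\Lambda=2\pi r\big(\mathcal{A}^{-1}\Lambda_0\oplus T^t(\mathcal{B}^{-1})^t\Lambda_0\big)$, which is precisely the modulus in the statement.

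I expect this third step to be the main obstacle: the continuous matching of the first two steps is routine, but isolating the \emph{discrete} part — equivalently, determining $\mathrm{Stab}(E)$ and showing that the denominators $r_i'$ together with the change-of-lattice matrices $\mathcal{A}^{-1}$ and $T^t(\mathcal{B}^{-1})^t$ combine to give exactly $\Lambda$ — requires the full clock–shift bookkeeping of Proposition \ref{simplicity} and careful tracking of the fractional exponents $\tfrac{a_i'}{r_i'}$ through the periodicity relations. Keeping the two lattice directions correctly separated ($\mathcal{A}^{-1}\Lambda_0$ from the $x$/$V_j$ data and $T^t(\mathcal{B}^{-1})^t\Lambda_0$ from the $y$/$U_k$ data), and thereby accounting for the asymmetric appearance of $T^t$ and the sign in $\xi'-\xi$ versus $\theta-\theta'$, is the delicate point.
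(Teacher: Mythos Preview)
Your proposal is correct and takes essentially the same approach as the paper: both reduce via Theorem~\ref{theoremMM} to the $\mathrm{Pic}^0$-action, pass to the elementary-divisor normal form through the biholomorphism $\varphi$ of Proposition~\ref{simplicity}, and then determine the stabilizer lattice by exactly the clock--shift/Fourier bookkeeping you outline in your third step. The only organizational difference is that the paper extracts the continuous invariant by comparing the determinant line bundles $\det E_{(r,\tilde A,r',\tilde{\mathcal U},\tilde p,\tilde q)}$ and $\det E_{(r,\tilde A,r',\tilde{\mathcal U}',\tilde p',\tilde q')}$ rather than introducing your combination $\Phi$ directly, but since $\det E$ is controlled precisely by $(\xi,\theta,p,q)$ this is the same computation in different packaging.
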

\begin{proof}
In order to prove the statement of this theorem, we again take the biholomorphic map
\begin{equation*}
\varphi : T^{2n}_{J=T} \stackrel{\sim }{\rightarrow } T^{2n}_{J=T'}
\end{equation*}
in the proof of Proposition \ref{simplicity}. Of course, we can also regard this biholomorphic map $\varphi $ as the diffeomorphism $\mathbb{R}^{2n}/2\pi \mathbb{Z}^{2n} \stackrel{\sim }{\rightarrow } \mathbb{R}^{2n}/2\pi \mathbb{Z}^{2n}$ which is expressed locally as
\begin{equation*}
\varphi \left( \begin{array}{ccc} x \\ y \end{array} \right) = \left( \begin{array}{ccc} \mathcal{B}^{-1} & O \\ O & (\mathcal{A}^{-1})^t \end{array} \right) \left( \begin{array}{ccc} x \\ y \end{array} \right).
\end{equation*}
Then, by the biholomorphicity of the map $\varphi$, 
\begin{equation*}
E_{(r,A,r',\mathcal{U},p,q)}\cong E_{(r,A,r',\mathcal{U}',p',q')}
\end{equation*}
holds if and only if
\begin{equation*}
(\varphi ^{-1})^*E_{(r,A,r',\mathcal{U},p,q)}\cong (\varphi ^{-1})^*E_{(r,A,r',\mathcal{U}',p',q')}
\end{equation*}
holds, so we consider the condition such that $(\varphi ^{-1})^*E_{(r,A,r',\mathcal{U},p,q)}\cong (\varphi ^{-1})^*E_{(r,A,r',\mathcal{U}',p',q')}$ holds. Now, by using the suitable sets $\tilde{\mathcal{U}}$ and $\tilde{\mathcal{U}}'$ (the definitions of $\tilde{\mathcal{U}}$ and $\tilde{\mathcal{U}}'$ depend on the data $(\mathcal{U},\mathcal{A},\mathcal{B})$ and $(\mathcal{U}',\mathcal{A},\mathcal{B})$, respectively), we can regard $(\varphi ^{-1})^*E_{(r,A,r',\mathcal{U},p,q)}$ and $(\varphi ^{-1})^*E_{(r,A,r',\mathcal{U}',p',q')}$ as $E_{(r,\tilde{A},r',\tilde{\mathcal{U}},\tilde{p},\tilde{q})}$ and $E_{(r,\tilde{A},r',\tilde{\mathcal{U}}',\tilde{p}',\tilde{q}')}$, respectively, where
\begin{equation*}
\tilde{A}:=\mathcal{A}A\mathcal{B}=\left( \begin{array}{ccccccc} \tilde{a_1} & & & & & \\ & \ddots & & & & \\ & & \tilde{a_s} & & & \\ & & & 0 & & \\ & & & & \ddots & \\ & & & & & 0 \end{array} \right), \ \tilde{p}:=\mathcal{A}p, \ \tilde{q}:=\mathcal{B}^t q, \ \tilde{p}':=\mathcal{A}p', \ \tilde{q}':=\mathcal{B}^t q'.
\end{equation*}
Then, constant vectors $\xi $, $\theta $, $\xi '$, $\theta '\in \mathbb{R}^n$ are also transformed to constant vectors
\begin{equation*}
\tilde{\xi }:=\mathcal{B}^t \xi , \ \tilde{\theta }:=\mathcal{A}\theta , \ \tilde{\xi }':=\mathcal{B}^t \xi ', \ \tilde{\theta }':=\mathcal{A}\theta ' \in \mathbb{R}^n,
\end{equation*}
respectively. For these holomorphic vector bundles $E_{(r,\tilde{A},r',\tilde{\mathcal{U}},\tilde{p},\tilde{q})}$ and $E_{(r,\tilde{A},r',\tilde{\mathcal{U}}',\tilde{p}',\tilde{q}')}$, by Theorem \ref{theoremMM}, we see that there exists a holomorphic line bundle $E_{(1,O,1,\mathcal{V},u,v)}$ such that
\begin{equation*}
E_{(r,\tilde{A},r',\tilde{\mathcal{U}}',\tilde{p}',\tilde{q}')} \cong E_{(r,\tilde{A},r',\tilde{\mathcal{U}},\tilde{p},\tilde{q})}\otimes E_{(1,O,1,\mathcal{V},u,v)}.
\end{equation*}
Here, 
\begin{equation*}
\mathcal{V}:=\Bigl\{ e^{\mathbf{i}\tau _1},\cdots, e^{\mathbf{i}\tau _n}, e^{\mathbf{i}\sigma _1},\cdots, e^{\mathbf{i}\sigma _n}\in U(1) \Bigr\}, \ u, v\in \mathbb{R}^n,
\end{equation*}
and for simplicity, we set
\begin{equation*}
\tau :=(\tau _1,\cdots, \tau _n)^t, \ \sigma :=(\sigma _1,\cdots, \sigma _n)^t \in \mathbb{R}^n.
\end{equation*}
In particular, for each $j=1,\cdots, n$, we assume that $e^{\mathbf{i}\tau _j}$ and $e^{\mathbf{i}\sigma _j}$ are the transition functions of $E_{(1,O,1,\mathcal{V},u,v)}$ in the $x_j$ direction and the transition function of $E_{(1,O,1,\mathcal{V},u,v)}$ in the $y_j$ direction, respectively. Therefore, since
\begin{equation}
E_{(r,\tilde{A},r',\tilde{\mathcal{U}},\tilde{p},\tilde{q})} \cong E_{(r,\tilde{A},r',\tilde{\mathcal{U}}',\tilde{p}',\tilde{q}')} \label{E}
\end{equation}
holds if and only if 
\begin{equation}
E_{(r,\tilde{A},r',\tilde{\mathcal{U}},\tilde{p},\tilde{q})} \cong E_{(r,\tilde{A},r',\tilde{\mathcal{U}},\tilde{p},\tilde{q})}\otimes E_{(1,O,1,\mathcal{V},u,v)} \label{E1}
\end{equation}
holds, our first goal is to find the relation on the parameters $u$, $v$, $\tau $, $\sigma \in \mathbb{R}^n$ such that the relation (\ref{E1}) holds. By using Theorem \ref{theoremMM} again, we see that there exists a holomorphic line bundle $E_{(1,O,1,\mathcal{V}',u',v')}$ such that
\begin{equation*}
E_{(r,\tilde{A},r',\tilde{\mathcal{U}},\tilde{p},\tilde{q})} \cong E_{(r,\tilde{A},r',\mathcal{U}_0,p_0,q_0)} \otimes E_{(1,O,1,\mathcal{V}',u',v')},
\end{equation*}
where
\begin{equation*}
\mathcal{U}_0:=\Bigl\{ V_1',\cdots, V_n', U_1',\cdots, U_n' \in U(r') \Bigr\}, \ p_0, q_0\in \mathbb{R}^n.
\end{equation*}
Here, note that the definitions of $V_1',\cdots, V_n', U_1',\cdots, U_n' \in U(r')$ are given in the proof of Proposition \ref{simplicity}. Thus, since we can rewrite the relation (\ref{E1}) to
\begin{equation*}
E_{(r,\tilde{A},r',\mathcal{U}_0,p_0,q_0)}\otimes E_{(1,O,1,\mathcal{V}',u',v')} \cong \left( E_{(r,\tilde{A},r',\mathcal{U}_0,p_0,q_0)}\otimes E_{(1,O,1,\mathcal{V}',u',v')} \right) \otimes E_{(1,O,1,\mathcal{V},u,v)},
\end{equation*}
as a result, it is enough to consider the condition such that
\begin{equation*}
E_{(r,\tilde{A},r',\mathcal{U}_0,p_0,q_0)}\cong E_{(r,\tilde{A},r',\mathcal{U}_0,p_0,q_0)}\otimes E_{(1,O,1,\mathcal{V},u,v)}
\end{equation*}
holds. By a direct calculation, we can actually check that 
\begin{equation*}
E_{(r,\tilde{A},r',\mathcal{U}_0,p_0,q_0)}\cong E_{(r,\tilde{A},r',\mathcal{U}_0,p_0,q_0)}\otimes E_{(1,O,1,\mathcal{V},u,v)}
\end{equation*}
holds if and only if 
\begin{equation}
u+T'^t v\equiv \sigma -T'^t \tau \ (\mathrm{mod} \ 2\pi ( \left( \begin{array}{cccc} \frac{\mathbb{Z}}{r_1'} \\ \vdots \\ \frac{\mathbb{Z}}{r_s'} \\ \mathbb{Z} \\ \vdots \\ \mathbb{Z} \end{array} \right) \oplus T'^t \left( \begin{array}{cccc} \frac{\mathbb{Z}}{r_1'} \\ \vdots \\ \frac{\mathbb{Z}}{r_s'} \\ \mathbb{Z} \\ \vdots \\ \mathbb{Z} \end{array} \right) )) \label{nu}
\end{equation}
holds, and in particular, we can regard the relation (\ref{nu}) as 
\begin{align}
&(\tilde{p}+T'^t \tilde{q})-\{ (\tilde{p}+T'^t \tilde{q})+r(u+T'^t v) \}\equiv \frac{r}{r'}\{ \tilde{\theta }-(\tilde{\theta }+r'\sigma ) \} +T'^t \frac{r}{r'} \{ (\tilde{\xi }+r'\tau )-\tilde{\xi } \} \notag \\
&(\mathrm{mod} \ 2\pi r( \left( \begin{array}{cccc} \frac{\mathbb{Z}}{r_1'} \\ \vdots \\ \frac{\mathbb{Z}}{r_s'} \\ \mathbb{Z} \\ \vdots \\ \mathbb{Z} \end{array} \right) \oplus T'^t \left( \begin{array}{cccc} \frac{\mathbb{Z}}{r_1'} \\ \vdots \\ \frac{\mathbb{Z}}{r_s'} \\ \mathbb{Z} \\ \vdots \\ \mathbb{Z} \end{array} \right) )). \label{nu'}
\end{align}
Hence, we see that the relation (\ref{E1}) holds if and only if the relation (\ref{nu'}) holds. Here, note that there exists an isomorphism
\begin{equation*}
\mathrm{det}E_{(r,\tilde{A},r',\tilde{\mathcal{U}},\tilde{p},\tilde{q})}\cong \mathrm{det}(E_{(r,\tilde{A},r',\tilde{\mathcal{U}},\tilde{p},\tilde{q})}\otimes E_{(1,O,1,\mathcal{V},u,v)})
\end{equation*}
with
\begin{align*}
&(\tilde{p}+T'^t \tilde{q})-\{ (\tilde{p}+T'^t \tilde{q})+r(u+T'^t v) \}\equiv \frac{r}{r'}\{ \tilde{\theta }-(\tilde{\theta }+r'\sigma ) \} +T'^t \frac{r}{r'} \{ (\tilde{\xi }+r'\tau )-\tilde{\xi } \} \\
&\left( \mathrm{mod} \ \frac{r}{r'}2\pi (\mathbb{Z}^n\oplus T'^t \mathbb{Z}^n) \right).
\end{align*}
Now, we consider the condition such that the relation (\ref{E}) holds. One necessary condition for the relation (\ref{E}) is that
\begin{equation}
\mathrm{det}E_{(r,\tilde{A},r',\tilde{\mathcal{U}},\tilde{p},\tilde{q})}\cong \mathrm{det}E_{(r,\tilde{A},r',\tilde{\mathcal{U}}',\tilde{p}',\tilde{q}')} \label{det}
\end{equation}
holds, and by a direct calculation, we can rewrite the relation (\ref{det}) to the following :
\begin{equation}
(\tilde{p}+T'^t\tilde{q})-(\tilde{p}'+T'^t\tilde{q}')\equiv \frac{r}{r'}(\tilde{\theta }-\tilde{\theta }')+T'^t \frac{r}{r'}(\tilde{\xi }'-\tilde{\xi }) \ \left( \mathrm{mod} \ \frac{r}{r'}2\pi (\mathbb{Z}^n\oplus T'^t \mathbb{Z}^n) \right). \label{det'}
\end{equation}
Therefore, since the definition of $E_{(1,O,1,\mathcal{V},u,v)}$ is given by
\begin{equation*}
E_{(r,\tilde{A},r',\tilde{\mathcal{U}}',\tilde{p}',\tilde{q}')}\cong E_{(r,\tilde{A},r',\tilde{\mathcal{U}},\tilde{p},\tilde{q})}\otimes E_{(1,O,1,\mathcal{V},u,v)},
\end{equation*}
and the relations (\ref{E1}) and (\ref{nu'}) are equivalent, by considering the relation (\ref{det'}), we can give the condition such that the relation (\ref{E}) holds as follows.
\begin{equation*}
(\tilde{p}+T'^t \tilde{q})-(\tilde{p}'+T'^t \tilde{q}')\equiv \frac{r}{r'}(\tilde{\theta }-\tilde{\theta }')+T'^t \frac{r}{r'}(\tilde{\xi }'-\tilde{\xi }) \ (\mathrm{mod} \ 2\pi r( \left( \begin{array}{cccc} \frac{\mathbb{Z}}{r_1'} \\ \vdots \\ \frac{\mathbb{Z}}{r_s'} \\ \mathbb{Z} \\ \vdots \\ \mathbb{Z} \end{array} \right) \oplus T'^t \left( \begin{array}{cccc} \frac{\mathbb{Z}}{r_1'} \\ \vdots \\ \frac{\mathbb{Z}}{r_s'} \\ \mathbb{Z} \\ \vdots \\ \mathbb{Z} \end{array} \right) )).
\end{equation*}
Thus, by considering the pullback bundles $\varphi ^*E_{(r,\tilde{A},r',\tilde{\mathcal{U}},\tilde{p},\tilde{q})}$ and $\varphi ^*E_{(r,\tilde{A},r',\tilde{\mathcal{U}}',\tilde{p}',\tilde{q}')}$, we can conclude that
\begin{equation*}
E_{(r,A,r',\mathcal{U},p,q)}\cong E_{(r,A,r',\mathcal{U}',p',q')}
\end{equation*}
holds if and only if
\begin{equation*}
(p+T^t q)-(p'+T^t q')\equiv \frac{r}{r'}(\theta -\theta ')+T^t \frac{r}{r'}(\xi '-\xi ) \ (\mathrm{mod} \ 2\pi r( \mathcal{A}^{-1} \left( \begin{array}{cccc} \frac{\mathbb{Z}}{r_1'} \\ \vdots \\ \frac{\mathbb{Z}}{r_s'} \\ \mathbb{Z} \\ \vdots \\ \mathbb{Z} \end{array} \right) \oplus T'^t (\mathcal{B}^{-1})^t \left( \begin{array}{cccc} \frac{\mathbb{Z}}{r_1'} \\ \vdots \\ \frac{\mathbb{Z}}{r_s'} \\ \mathbb{Z} \\ \vdots \\ \mathbb{Z} \end{array} \right) ))
\end{equation*}
holds.
\end{proof}
\begin{rem}
When we work over a pair $(T_{J=T}^2, \check{T}_{J=T}^2)$ of elliptic curves, Theorem \ref{EE'} implies that there exists a one-to-one correspondence between the set of the isomorphism classes of holomorphic vector bundles $E_{(r,A,r',\mathcal{U},p,q)}$ and the set of the isomorphism classes of holomorphic line bundles $\mathrm{det}E_{(r,A,r',\mathcal{U},p,q)}$.
\end{rem}

\section{Symplectic geometry side}
In this section, we define the objects of the Fukaya category corresponding to holomorphic vector bundles $E_{(r,A,r',\mathcal{U},p,q)}\rightarrow T^{2n}_{J=T}$, and study the isomorphism classes of them. The discussions in this section are based on the SYZ construction (SYZ transform) \cite{SYZ} (see also \cite{leung}, \cite{A-P}). 

\subsection{The definition of $\mathscr{L}_{(r,A,p,q)}$}
In this subsection, we define a class of pairs of affine Lagrangian submanifolds 
\begin{equation*}
L_{(r,A,p)}
\end{equation*}
in $\check{T}^{2n}_{J=T}$ and unitary local systems
\begin{equation*}
\mathcal{L}_{(r,A,p,q)}\rightarrow L_{(r,A,p)}.
\end{equation*}

First, we recall the definition of objects of the Fukaya categories following \cite[Definition 1.1]{Fuk}. Let $(M, \Omega)$ be a symplectic manifold $(M, \omega)$ together with a closed 2-form $B$ on $M$. Here, we put $\Omega=\omega+\sqrt{-1}B$ (note $-B+\sqrt{-1}\omega$ is used in many of the literatures). Then, we consider pairs $(L, \mathcal{L})$ with the following properties :
\begin{align}
&L \ \mathrm{is} \ \mathrm{a} \ \mathrm{Lagrangian} \ \mathrm{submanifold} \ \mathrm{of} \ (M,\omega). \label{f1} \\
&\mathcal{L}\rightarrow L \ \mathrm{is} \ \mathrm{a} \ \mathrm{line} \ \mathrm{bundle} \ \mathrm{together} \ \mathrm{with} \ \mathrm{a} \ \mathrm{connection} \ \nabla^{\mathcal{L}} \ \mathrm{such} \ \mathrm{that} \label{f2} \\
&F_{\nabla^{\mathcal{L}}}=2\pi\sqrt{-1}B|_L. \notag
\end{align}
In this context, $F_{\nabla^{\mathcal{L}}}$ denotes the curvature form of the connection $\nabla^{\mathcal{L}}$. We define objects of the Fukaya category on $(M, \Omega)$ by pairs $(L,\mathcal{L})$ which satisfy the properties (\ref{f1}), (\ref{f2}). 

Let us consider the following $n$-dimensional submanifold $\tilde{L}_{(r,A,p)}$ in $\mathbb{R}^{2n}$ :
\begin{equation*}
\tilde{L} _{(r,A,p)}:=\left\{ \left( \begin{array}{ccc} \check{x} \\ \check{y} \end{array} \right)\in \mathbb{R}^{2n} \ | \ \check{y}=\frac{1}{r}A\check{x}+\frac{1}{r}p \right\}.
\end{equation*}
We see that this $n$-dimensional submanifold $\tilde{L} _{(r,A,p)}$ satisfies the property (\ref{f1}), namely, $\tilde{L}_{(r,A,p)}$ becomes a Lagrangian submanifold in $\mathbb{R}^{2n}$ if and only if $\omega A=(\omega A)^t$ holds. Then, for the covering map $\pi : \mathbb{R}^{2n}\rightarrow \check{T}^{2n}_{J=T}$, 
\begin{equation*}
L_{(r,A,p)}:=\pi (\tilde{L}_{(r,A,p)})
\end{equation*}
defines a Lagrangian submanifold in $\check{T}^{2n}_{J=T}$. On the other hand, we can also regard the complexified symplectic torus $\check{T}^{2n}_{J=T}$ as the trivial special Lagrangian torus fibration $\check{\pi } : \check{T}^{2n}_{J=T} \rightarrow \mathbb{R}^n/2\pi \mathbb{Z}^n$, where $\check{x}$ is the local coordinates of the base space $\mathbb{R}^n/2\pi \mathbb{Z}^n$ and $\check{y}$ is the local coordinates of the fiber of $\check{\pi } : \check{T}^{2n}_{J=T} \rightarrow \mathbb{R}^n/2\pi \mathbb{Z}^n$. Then, we can interpret each affine Lagrangian submanifold $L_{(r,A,p)}$ in $\check{T}^{2n}_{J=T}$ as the affine Lagrangian multi section 
\begin{equation*}
s(\check{x})=\frac{1}{r}A\check{x}+\frac{1}{r}p
\end{equation*}
of $\check{\pi } : \check{T}^{2n}_{J=T} \rightarrow \mathbb{R}^n/2\pi \mathbb{Z}^n$.
\begin{rem}
As explained in section 3, while $r':=r_1'\cdots r_s'\in \mathbb{N}$ is the rank of $E_{(r,A,r',\mathcal{U},p,q)}\rightarrow T^{2n}_{J=T}$ $($see the relations $(\ref{matAB})$ and $(\ref{r'}))$, in the symplectic geometry side, this $r'\in \mathbb{N}$ is interpreted as follows. For the affine Lagrangian submanifold $L_{(r,A,p)}$ in $\check{T}^{2n}_{J=T}$ which is defined by a given data $(r,A,p)\in \mathbb{N}\times M(n;\mathbb{Z})\times \mathbb{R}^n$, we regard it as the affine Lagrangian multi section $s(\check{x})=\frac{1}{r}A\check{x}+\frac{1}{r}p$ of $\check{\pi } : \check{T}^{2n}_{J=T} \rightarrow \mathbb{R}^n/2\pi \mathbb{Z}^n$. Then, for each point $\check{x}\in \mathbb{R}^n/2\pi \mathbb{Z}^n$, we see
\begin{align*}
s(\check{x})=\biggl\{ & \left( \frac{1}{r}A\check{x}+\frac{1}{r}p+\frac{2\pi }{r}A\mathcal{B}M_s \right) \in \check{\pi }^{-1}(\check{x})\approx \mathbb{R}^n/2\pi \mathbb{Z}^n \ | \\
&M_s=(m_1,\cdots, m_s, 0,\cdots, 0)^t\in \mathbb{Z}^n, \ 0\leq m_i\leq r_i'-1, \ i=1,\cdots, s \biggr\},
\end{align*}
and this indicates that $s(\check{x})$ consists of $r'$ points. Thus, we can regard $r'\in \mathbb{N}$ as the multiplicity of $s(\check{x})=\frac{1}{r}A\check{x}+\frac{1}{r}p$.
\end{rem}
We then consider the trivial complex line bundle 
\begin{equation*}
\mathcal{L}_{(r,A,p,q)}\rightarrow L_{(r,A,p)}
\end{equation*}
with the flat connection
\begin{equation*}
\nabla_{\mathcal{L}_{(r,A,p,q)}}:=d-\frac{\mathbf{i}}{2\pi }\frac{1}{r}q^t d\check{x},
\end{equation*}
where $q\in \mathbb{R}^n$ is the unitary holonomy of $\mathcal{L}_{(r,A,p,q)}$ along $L_{(r,A,p)}\approx T^n$. We discuss the property (\ref{f2}) for this pair $(L_{(r,A,p)}, \mathcal{L}_{(r,A,p,q)})$ :
\begin{equation*}
\Omega _{\mathcal{L}_{(r,A,p,q)}}=\left. d\check{x}^t B d\check{y} \right|_{L_{(r,A,p)}}.
\end{equation*}
Here, $\Omega _{\mathcal{L}_{(r,A,p,q)}}$ is the curvature form of the flat connection $\nabla_{\mathcal{L}_{(r,A,p,q)}}$, i.e., $\Omega _{\mathcal{L}_{(r,A,p,q)}}=0$. Hence, we see 
\begin{equation*}
\left. d\check{x}^t B d\check{y}\right|_{L_{(r,A,p)}}=\frac{1}{r}d\check{x}^t BA d\check{x}=0,
\end{equation*}
so one has $BA=(BA)^t$. Note that $\omega A=(\omega A)^t$ and $BA=(BA)^t$ hold if and only if $AT=(AT)^t$ holds. Hereafter, for simplicity, we set
\begin{equation*}
\mathscr{L}_{(r,A,p,q)}:=(L_{(r,A,p)},\mathcal{L}_{(r,A,p,q)}).
\end{equation*}
By summarizing the above discussions, we obtain the following proposition. In particular, the condition $AT=(AT)^t$ in the following proposition is also the condition such that a complex vector bundle $E_{(r,A,r',\mathcal{U},p,q)}\rightarrow T^{2n}_{J=T}$ becomes a holomorphic vector bundle (see Proposition \ref{prophol}).
\begin{proposition} \label{propfukob}
For a given quadruple $(r,A,p,q)\in \mathbb{N}\times M(n;\mathbb{Z})\times \mathbb{R}^n\times \mathbb{R}^n$, $\mathscr{L}_{(r,A,p,q)}$ gives an object of the Fukaya category on $\check{T}^{2n}_{J=T}$ if and only if $AT=(AT)^t$ holds.
\end{proposition}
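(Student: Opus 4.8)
The plan is to verify the two defining properties (\ref{f1}) and (\ref{f2}) of an object of the Fukaya category separately and then fuse the two resulting matrix conditions into the single condition $AT=(AT)^t$; most of the needed computation is already carried out in the discussion preceding the statement, so the proof is mainly a matter of assembling the pieces. First I would treat property (\ref{f1}). Since $L_{(r,A,p)}=\pi(\tilde{L}_{(r,A,p)})$ and the covering map $\pi$ is a local diffeomorphism, it suffices to check the Lagrangian condition for $\tilde{L}_{(r,A,p)}$ in $\mathbb{R}^{2n}$. Along $\tilde{L}_{(r,A,p)}$ one has $d\check{y}=\frac{1}{r}A\,d\check{x}$, so the restriction of $\omega=d\check{x}^t\omega\,d\check{y}$ becomes $\frac{1}{r}d\check{x}^t\omega A\,d\check{x}$; because $dx^i\wedge dx^j$ is antisymmetric, this two-form vanishes exactly when $\omega A$ is symmetric. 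Hence (\ref{f1}) holds if and only if $\omega A=(\omega A)^t$.

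Next I would treat property (\ref{f2}). The connection $\nabla_{\mathcal{L}_{(r,A,p,q)}}$ has constant coefficient $\frac{1}{r}q^t$, hence is flat, and so its curvature $\Omega_{\mathcal{L}_{(r,A,p,q)}}$ vanishes. Property (\ref{f2}) demands $F_{\nabla^{\mathcal{L}}}=2\pi\sqrt{-1}\,B|_{L_{(r,A,p)}}$, which therefore reduces to the requirement $B|_{L_{(r,A,p)}}=0$. Restricting $B=d\check{x}^tB\,d\check{y}$ along $\tilde{L}_{(r,A,p)}$, again using $d\check{y}=\frac{1}{r}A\,d\check{x}$, gives $\frac{1}{r}d\check{x}^tBA\,d\check{x}$, which vanishes if and only if $BA$ is symmetric. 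Thus (\ref{f2}) holds if and only if $BA=(BA)^t$.

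Finally I would combine the two conditions. From the definitions $\omega=\mathrm{Im}(-T^{-1})^t$ and $B=\mathrm{Re}(-T^{-1})^t$ one obtains $B+\mathbf{i}\omega=(-T^{-1})^t=-(T^t)^{-1}$. Since a complex matrix $M+\mathbf{i}N$ with $M$, $N$ real is symmetric if and only if both $M$ and $N$ are symmetric, the conjunction of $\omega A=(\omega A)^t$ and $BA=(BA)^t$ is equivalent to the symmetry of the single complex matrix $(B+\mathbf{i}\omega)A=-(T^t)^{-1}A$. Transposing the latter and multiplying on the left by $T^t$ and on the right by $T$ converts the symmetry relation $(T^t)^{-1}A=A^t T^{-1}$ into $AT=T^tA^t=(AT)^t$, yielding the claimed equivalence. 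The only genuinely delicate point is precisely this last algebraic reduction: one must track transposes carefully and remember that $\omega$ and $B$ are the imaginary and real parts of $(-T^{-1})^t$, not of $-T^{-1}$ itself, since a stray transpose or sign would spoil the clean conclusion $AT=(AT)^t$; the two restriction computations themselves are routine and already indicated in the text.
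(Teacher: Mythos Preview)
Your proof is correct and follows exactly the approach of the paper, which proves this proposition by ``summarizing the above discussions'': checking that property (\ref{f1}) amounts to $\omega A=(\omega A)^t$, that property (\ref{f2}) amounts to $BA=(BA)^t$, and then noting that these two real conditions together are equivalent to $AT=(AT)^t$. Your final algebraic reduction making that last equivalence explicit is a nice addition, since the paper simply asserts it.
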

\begin{definition}
We denote the full subcategory of the Fukaya category on $\check{T}^{2n}_{J=T}$ consisting of objects $\mathscr{L}_{(r,A,p,q)}$ which satisfy the condition $AT=(AT)^t$ by $Fuk_{\mathrm{aff}}(\check{T}^{2n}_{J=T})$.
\end{definition}

\subsection{The isomorphism classes of $\mathscr{L}_{(r,A,p,q)}$}
The discussions in this subsection correspond to the discussions in subsection 3.2, so throughout this subsection, we fix $r\in \mathbb{N}$, $A\in M(n;\mathbb{Z})$, and consider the condition such that $\mathscr{L}_{(r,A,p,q)}\cong \mathscr{L}_{(r,A,p',q')}$ holds, where
\begin{equation*}
p, q, p', q'\in \mathbb{R}^n.
\end{equation*}

We explain the definition of the equivalence of objects of the Fukaya categories (cf. \cite[Definition 1.4]{Fuk}). Let us consider a pair $(M,\Omega )$ consisting of an even dimensional differentiable manifold $M$ and a complexified symplectic form $\Omega$. We take two objects $\mathscr{L}:=(L,\mathcal{L})$, $\mathscr{L}':=(L',\mathcal{L}')$ of the Fukaya category $Fuk(M,\Omega )$, where $L$, $L'$ are Lagrangian submanifolds in $(M,\Omega )$, and $\mathcal{L}\rightarrow L$, $\mathcal{L}'\rightarrow L'$ are unitary local systems. Then, if there exists a symplectic automorphism $\Phi : (M,\Omega ) \stackrel{\sim }{\rightarrow } (M,\Omega )$ such that
\begin{align*}
&\Phi ^{-1}(L')=L, \\
&\Phi ^*\mathcal{L}'\cong \mathcal{L}, 
\end{align*}
we say that $\mathscr{L}$ is isomorphic to $\mathscr{L}'$, and write $\mathscr{L}\cong \mathscr{L}'$.

We consider the isomorphism classes of $\mathscr{L}_{(r,A,p,q)}$, namely, we consider the condition such that $\mathscr{L}_{(r,A,p,q)}\cong \mathscr{L}_{(r,A,p',q')}$ holds as an analogue of Theorem \ref{EE'}. Actually, the following theorem holds.
\begin{theo} \label{LL'}
Two objects $\mathscr{L}_{(r,A,p,q)}$, $\mathscr{L}_{(r,A,p',q')}$ are isomorphic to each other, 
\begin{equation*}
\mathscr{L}_{(r,A,p,q)}\cong \mathscr{L}_{(r,A,p',q')},
\end{equation*}
if and only if 
\begin{equation*}
p\equiv p' \ (\mathrm{mod} \ 2\pi r\mathcal{A}^{-1} \left( \begin{array}{ccccc} \frac{\mathbb{Z}}{r_1'} \\ \vdots \\ \frac{\mathbb{Z}}{r_s'} \\ \mathbb{Z} \\ \vdots \\ \mathbb{Z} \end{array} \right)), \ q\equiv q' \ (\mathrm{mod} \ 2\pi r(\mathcal{B}^{-1})^t \left( \begin{array}{ccccc} \frac{\mathbb{Z}}{r_1'} \\ \vdots \\ \frac{\mathbb{Z}}{r_s'} \\ \mathbb{Z} \\ \vdots \\ \mathbb{Z} \end{array} \right))
\end{equation*}
hold.
\end{theo}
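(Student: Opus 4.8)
The plan is to imitate, on the symplectic side, the three-step structure of the proof of Theorem \ref{EE'}: diagonalise $A$ by an explicit linear symplectomorphism, settle the diagonal case directly, and then transport the result back. This is the natural mirror of the passage through $\varphi$ and the Smith normal form (\ref{matAB}) used there.

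For the diagonalisation I would introduce the linear map
\[
\Psi:\check{T}^{2n}_{J=T}\stackrel{\sim}{\rightarrow}\check{T}^{2n}_{J=T'},\qquad
\Psi\left(\begin{array}{c}\check{x}\\\check{y}\end{array}\right)=\left(\begin{array}{cc}\mathcal{B}^{-1}&O\\O&\mathcal{A}\end{array}\right)\left(\begin{array}{c}\check{x}\\\check{y}\end{array}\right),
\]
the symplectic counterpart of the biholomorphism $\varphi$ from the proofs of Proposition \ref{simplicity} and Theorem \ref{EE'}. Since $\mathcal{A},\mathcal{B}\in GL(n;\mathbb{Z})$ it descends to the tori, and a one-line computation from $T'=\mathcal{B}^{-1}T\mathcal{A}^t$ gives $(-T'^{-1})^t=\mathcal{B}^t(-T^{-1})^t\mathcal{A}^{-1}$, hence $\Psi^*\bigl(d\check{X}^t(-T'^{-1})^t d\check{Y}\bigr)=\tilde{\omega}$; thus $\Psi$ is a symplectomorphism and carries isomorphic objects to isomorphic objects. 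Reading off its action, the multi section $\check{y}=\frac{1}{r}A\check{x}+\frac{1}{r}p$ is sent to $\check{Y}=\frac{1}{r}\tilde{A}\check{X}+\frac{1}{r}\tilde{p}$ with $\tilde{A}=\mathcal{A}A\mathcal{B}$ diagonal and $\tilde{p}=\mathcal{A}p$, while the connection form $\frac{1}{r}q^t d\check{x}$ becomes $\frac{1}{r}\tilde{q}^t d\check{X}$ with $\tilde{q}=\mathcal{B}^t q$. These are exactly the substitutions $\tilde{p}=\mathcal{A}p$, $\tilde{q}=\mathcal{B}^t q$ appearing in the proof of Theorem \ref{EE'}, so it remains to show that $\mathscr{L}_{(r,\tilde{A},\tilde{p},\tilde{q})}\cong\mathscr{L}_{(r,\tilde{A},\tilde{p}',\tilde{q}')}$ over $\check{T}^{2n}_{J=T'}$ if and only if $\tilde{p}\equiv\tilde{p}'$ and $\tilde{q}\equiv\tilde{q}'$, both modulo the standard lattice $2\pi r(\frac{\mathbb{Z}}{r_1'}\times\cdots\times\frac{\mathbb{Z}}{r_s'}\times\mathbb{Z}\times\cdots\times\mathbb{Z})$; pulling these back through $\Psi$ then produces the asserted congruences modulo $2\pi r\mathcal{A}^{-1}(\cdots)$ and $2\pi r(\mathcal{B}^{-1})^t(\cdots)$.

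In the diagonal case the sufficiency is transparent: when both congruences hold the two affine multi sections literally coincide as subsets of $\check{T}^{2n}_{J=T'}$—this is the identity $\tilde{A}\mathbb{Z}^n+r\mathbb{Z}^n=r(\frac{\mathbb{Z}}{r_1'}\times\cdots\times\mathbb{Z})$, which follows from $\tilde a_i/r=a_i'/r_i'$ with $\gcd(a_i',r_i')=1$, so that $\gcd(\tilde a_i,r)=r/r_i'$—and the two flat connections then have equal holonomy, so $\Phi=\mathrm{id}$ realises the isomorphism. The holonomy half of necessity is equally direct: computing the holonomy of $\mathcal{L}_{(r,\tilde{A},\tilde{p},\tilde{q})}$ around the generators of $L_{(r,\tilde{A},\tilde{p})}\approx T^n$—whose $i$-th loop runs over $\check{X}_i\in[0,2\pi r_i']$ for $i\leq s$ and $\check{X}_i\in[0,2\pi]$ for $i>s$, by the multiplicity count in the Remark after Proposition \ref{propfukob}—yields the values $e^{\mathbf{i}r_i'\tilde{q}_i/r}$ and $e^{\mathbf{i}\tilde{q}_i/r}$, whose invariance under any isomorphism forces precisely the stated congruence on $\tilde{q}$.

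The one genuinely delicate point, and the step I expect to be the main obstacle, is the necessity of the $\tilde{p}$-congruence: one must rule out that an isomorphism trades a displacement of the Lagrangian in the fibre direction for a compensating shift of holonomy. Naively a fibre translation is a symplectic automorphism that moves $\tilde{p}$ while fixing the $\check{X}$-holonomy, which would wrongly leave $\tilde{p}$ unconstrained; the resolution is that the $B$-field enters the comparison $\Phi^*\mathcal{L}'\cong\mathcal{L}$ through the flux of the connecting isotopy, so that the genuine invariant is the complexified holonomy $\tilde{p}+T'^t\tilde{q}$ and not $\tilde{q}$ alone. I would make this precise exactly as in Theorem \ref{EE'}: descend to the determinant (rank-one) object, where the necessary condition is the congruence on $\tilde{p}+T'^t\tilde{q}$ modulo $2\pi r(\frac{\mathbb{Z}}{r_1'}\times\cdots\times\mathbb{Z}\oplus T'^t(\frac{\mathbb{Z}}{r_1'}\times\cdots\times\mathbb{Z}))$—the symplectic mirror of the step $\det E\cong\det(E\otimes L)$ and of relation (\ref{det})—and then use that $\mathrm{Im}\,T'>0$ makes $\mathbb{R}^n$ and $T'^t\mathbb{R}^n$ independent in $\mathbb{C}^n$, so the single lattice condition splits into separate congruences on $\tilde{p}$ and on $\tilde{q}$. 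Combined with the holonomy computation above this pins down both halves over the fine lattice, and transporting through $\Psi$ completes the proof.
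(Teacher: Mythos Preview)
Your reduction via $\Psi$ and your treatment of sufficiency and of the $\tilde q$-congruence match the paper: the paper's symplectomorphism $\phi$ is your $\Psi$, and its explicit computation of the bundle map $\psi(\check X)=\lambda\, e^{\frac{\mathbf{i}}{2\pi r}(q'-q)^t\mathcal{B}\check X}$ together with the periodicity constraints $\psi(X^i+2\pi r_i')=\psi(X^i)$ (for $i\le s$) and $\psi(X^i+2\pi)=\psi(X^i)$ (for $i>s$) is exactly your holonomy comparison written out.

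Where you diverge from the paper is the $\tilde p$-congruence, and your proposed route there does not work. The paper does \emph{not} pass through any determinant or complexified holonomy: it simply declares that with $(r,A)$ fixed the only $\Phi$ available is $\Phi=\mathrm{id}_{\check T^{2n}_{J=T'}}$, so the two Lagrangians must coincide as subsets of $\check T^{2n}_{J=T'}$, and that set-theoretic equality $(\phi^{-1})^{-1}(L_{(r,A,p)})=(\phi^{-1})^{-1}(L_{(r,A,p')})$ \emph{is} the congruence on $\mathcal A p$. Your alternative---``descend to the determinant (rank-one) object'' to get a condition on $\tilde p + T'^t\tilde q$ and then split via $\mathrm{Im}\,T'>0$---has no meaning on the symplectic side: there is no determinant functor on $Fuk_{\mathrm{aff}}$ (the local systems $\mathcal L_{(r,A,p,q)}$ are already line bundles), and the complex combination $p+T^t q$ is an artifact of the mirror holomorphic bundle, not an invariant attached to $\mathscr L_{(r,A,p,q)}$. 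The matrix $T'$ never enters the isomorphism problem for flat $U(1)$-bundles over the real torus $L_{(r,\tilde A,\tilde p)}$, so positivity of $\mathrm{Im}\,T'$ cannot be invoked. Your instinct that fibre translations look dangerous is reasonable, but the paper sidesteps rather than confronts it (leaning on the cited definition of isomorphism); to reproduce the paper's argument you should simply take $\Phi=\mathrm{id}$, conclude that the underlying Lagrangians agree as sets, and read off the $\tilde p$-congruence directly from that equality.
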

\begin{proof}
First, in order to prove the statement of this theorem, we prepare some notations. Since we considered the complex torus $T^{2n}_{J=T'}=\mathbb{C}^n/2\pi (\mathbb{Z}^n \oplus \mathcal{B}^{-1}T\mathcal{A}^t \mathbb{Z}^n)$ which is biholomorphic to the complex torus $T^{2n}_{J=T}$ in Theorem \ref{EE'} (and Proposition \ref{simplicity}), we take a mirror partner of the complex torus $T^{2n}_{J=T'}$. Here, we consider the complexified symplectic torus $\check{T}_{J=T'}^{2n}:=T_{\tilde{\omega }=(-T'^{-1})^t}^{2n}$ as a mirror partner of the complex torus $T^{2n}_{J=T'}$. We denote the local coordinates of $\check{T}^{2n}_{J=T'}$ by $(X^1,\cdots, X^n,Y^1,\cdots, Y^n)^t$, and set
\begin{equation*}
\check{X}:=(X^1,\cdots, X^n)^t, \ \check{Y}:=(Y^1,\cdots, Y^n)^t.
\end{equation*}
Let us consider a diffeomorphism $\phi : \check{T}^{2n}_{J=T} \stackrel{\sim }{\rightarrow } \check{T}^{2n}_{J=T'}$ which is expressed locally as
\begin{equation*}
\left( \begin{array}{ccc} \check{X} \\ \check{Y} \end{array} \right)=\phi \left( \begin{array}{ccc} \check{x} \\ \check{y} \end{array} \right) = \left( \begin{array}{ccc} \mathcal{B}^{-1} & O \\ O & \mathcal{A} \end{array} \right) \left( \begin{array}{ccc} \check{x} \\ \check{y} \end{array} \right).
\end{equation*}
By a direct calculation, we see
\begin{equation*}
\phi ^*(d\check{X}^t (-T'^{-1})^t d\check{Y})=d\check{x}^t (-T^{-1})^t d\check{y},
\end{equation*}
where $d\check{X}:=(dX^1,\cdots, dX^n)^t$, $d\check{Y}:=(dY^1,\cdots, dY^n)^t$, so this diffeomorphism $\phi $ is a symplectomorphism. 

Now, we define
\begin{align*}
&(\phi ^{-1})^*\mathscr{L}_{(r,A,p,q)}:=((\phi ^{-1})^{-1}(L_{(r,A,p)}), (\phi ^{-1})^*\mathcal{L}_{(r,A,p,q)}), \\
&(\phi ^{-1})^*\mathscr{L}_{(r,A,p',q')}:=((\phi ^{-1})^{-1}(L_{(r,A,p')}), (\phi ^{-1})^*\mathcal{L}_{(r,A,p',q')}),
\end{align*}
and let us consider the condition such that
\begin{equation*}
(\phi ^{-1})^*\mathscr{L}_{(r,A,p,q)}\cong (\phi ^{-1})^*\mathscr{L}_{(r,A,p',q')}
\end{equation*}
holds. Namely, our first goal is to consider when it is possible to construct a symplectic automorphism $\Phi : \check{T}^{2n}_{J=T'} \stackrel{\sim }{\rightarrow } \check{T}^{2n}_{J=T'}$ such that
\begin{align}
&\Phi ^{-1}((\phi ^{-1})^{-1}(L_{(r,A,p')}))=(\phi ^{-1})^{-1}(L_{(r,A,p)}), \label{sympauto1} \\
&\Phi ^* (\phi ^{-1})^*\mathcal{L}_{(r,A,p',q')} \cong (\phi ^{-1})^*\mathcal{L}_{(r,A,p,q)}. \label{sympauto2}
\end{align}
We consider the condition (\ref{sympauto1}). Since $r\in \mathbb{N}$, $A\in M(n;\mathbb{Z})$ are fixed, we can take the map 
\begin{equation*}
\Phi =\mathrm{id}_{\check{T}^{2n}_{J=T'}}
\end{equation*}
as a symplectic automorphism $\Phi : \check{T}^{2n}_{J=T'} \stackrel{\sim }{\rightarrow } \check{T}^{2n}_{J=T'}$ which satisfies the condition (\ref{sympauto1}), in the case $(\phi ^{-1})^{-1}(L_{(r,A,p)})=(\phi ^{-1})^{-1}(L_{(r,A,p')})$ only. Moreover,
\begin{equation*}
(\phi ^{-1})^{-1}(L_{(r,A,p)})=(\phi ^{-1})^{-1}(L_{(r,A,p')})
\end{equation*}
holds if and only if
\begin{equation}
\mathcal{A}p\equiv \mathcal{A}p' \ (\mathrm{mod} \ 2\pi r\left( \begin{array}{ccccc} \frac{\mathbb{Z}}{r_1'} \\ \vdots \\ \frac{\mathbb{Z}}{r_s'} \\ \mathbb{Z} \\ \vdots \\ \mathbb{Z} \end{array} \right)) \label{p}
\end{equation}
holds. Then, the condition (\ref{sympauto2}) becomes 
\begin{equation*}
(\phi ^{-1})^*\mathcal{L}_{(r,A,p,q)}\cong (\phi ^{-1})^*\mathcal{L}_{(r,A,p',q')},
\end{equation*}
so hereafter, we consider when $(\phi ^{-1})^*\mathcal{L}_{(r,A,p,q)}\cong (\phi ^{-1})^*\mathcal{L}_{(r,A,p',q')}$ holds on $(\phi ^{-1})^{-1}(L_{(r,A,p)})=(\phi ^{-1})^{-1}(L_{(r,A,p')})$ by computing an isomorphism
\begin{equation*}
\psi : (\phi ^{-1})^*\mathcal{L}_{(r,A,p,q)} \stackrel{\sim }{\rightarrow } (\phi ^{-1})^*\mathcal{L}_{(r,A,p',q')}
\end{equation*}
explicitly. The morphism $\psi $ need to satisfy the differential equation
\begin{equation}
\nabla_{(\phi ^{-1})^*\mathcal{L}_{(r,A,p',q')}}\psi =\psi \nabla_{(\phi ^{-1})^*\mathcal{L}_{(r,A,p,q)}}. \label{de}
\end{equation}
In particular, since the differential equation (\ref{de}) turns out to be
\begin{equation}
\left( \frac{\partial \psi }{\partial X^1},\cdots, \frac{\partial \psi }{\partial X^n} \right) -\frac{\mathbf{i}}{2\pi r}((q'^t\mathcal{B})_1-(q^t\mathcal{B})_1,\cdots, (q'^t\mathcal{B})_n-(q^t\mathcal{B})_n)\psi =0, \label{de'}
\end{equation}
by solving the differential equation (\ref{de'}), we obtain a solution
\begin{equation}
\psi (\check{X})=\lambda e^{\frac{\mathbf{i}}{2\pi r}(q'-q)^t\mathcal{B}\check{X}}, \label{psi}
\end{equation}
where $\lambda \in \mathbb{C}$ is an arbitrary constant. Furthermore, since $(\phi ^{-1})^*\mathcal{L}_{(r,A,p,q)}$ and $(\phi ^{-1})^*\mathcal{L}_{(r,A,p',q')}$ are trivial, this morphism $\psi $ must satisfy 
\begin{align*}
&\psi (X^1,\cdots, X^i+2\pi r_i',\cdots, X^n)=\psi (X^1,\cdots, X^i,\cdots, X^n) \ (i=1,\cdots, s), \\
&\psi (X^1,\cdots, X^i+2\pi ,\cdots, X^n)=\psi (X^1,\cdots, X^i,\cdots, X^n) \ (i=s+1,\cdots, n).
\end{align*}
By a direct calculation, we see that
\begin{align*}
&\psi (X^1,\cdots, X^i+2\pi r_i',\cdots, X^n)=e^{\frac{r_i'}{r}\mathbf{i}(q'-q)^t\mathcal{B}_i}\psi (X^1,\cdots, X^i,\cdots, X^n) \ (i=1,\cdots, s), \\
&\psi (X^1,\cdots, X^i+2\pi ,\cdots, X^n)=e^{\frac{\mathbf{i}}{r}(q'-q)^t\mathcal{B}_i}\psi (X^1,\cdots, X^i,\cdots, X^n) \ (i=s+1,\cdots, n)
\end{align*}
hold, where $\mathcal{B}_i:=(\mathcal{B}_{1i},\cdots, \mathcal{B}_{ni})^t$, so we obtain 
\begin{align*}
e^{\frac{r_i'}{r}\mathbf{i}(q'-q)^t\mathcal{B}_i}=1 \ (i=1,\cdots, s), \\\
e^{\frac{\mathbf{i}}{r}(q'-q)^t\mathcal{B}_i}=1 \ (i=s+1,\cdots, n).
\end{align*}
Clearly, these relations are equivalent to 
\begin{equation*}
\mathcal{B}^t(q'-q)=2\pi rN_{r'},
\end{equation*}
where
\begin{equation*}
N_{r'}:=\left( \frac{N_1}{r_1'},\cdots, \frac{N_s}{r_s'},N_{s+1},\cdots, N_n \right)^t, \ (N_1,\cdots, N_s, N_{s+1},\cdots, N_n)\in \mathbb{Z}^n,
\end{equation*}
and then, by the formula (\ref{psi}), the isomorphism $\psi $ is expressed locally as
\begin{equation*}
\psi (\check{X})=\lambda e^{\mathbf{i}N_{r'}^t\check{X}}
\end{equation*}
with $\lambda \not=0\in \mathbb{C}$. Hence, 
\begin{equation*}
(\phi ^{-1})^*\mathcal{L}_{(r,A,p,q)}\cong (\phi ^{-1})^*\mathcal{L}_{(r,A,p',q')}
\end{equation*}
holds on $(\phi ^{-1})^{-1}(L_{(r,A,p)})=(\phi ^{-1})^{-1}(L_{(r,A,p')})$ if and only if
\begin{equation}
\mathcal{B}^t q\equiv \mathcal{B}^t q' \ (\mathrm{mod} \ 2\pi r\left( \begin{array}{ccccc} \frac{\mathbb{Z}}{r_1'} \\ \vdots \\ \frac{\mathbb{Z}}{r_s'} \\ \mathbb{Z} \\ \vdots \\ \mathbb{Z} \end{array} \right)) \label{q}
\end{equation}
holds. Thus, by the relations (\ref{p}) and (\ref{q}), we can conclude that 
\begin{equation*}
\mathscr{L}_{(r,A,p,q)}\cong \mathscr{L}_{(r,A,p',q')}
\end{equation*}
holds if and only if
\begin{equation*}
p\equiv p' \ (\mathrm{mod} \ 2\pi r\mathcal{A}^{-1} \left( \begin{array}{ccccc} \frac{\mathbb{Z}}{r_1'} \\ \vdots \\ \frac{\mathbb{Z}}{r_s'} \\ \mathbb{Z} \\ \vdots \\ \mathbb{Z} \end{array} \right)), \ q\equiv q' \ (\mathrm{mod} \ 2\pi r(\mathcal{B}^{-1})^t \left( \begin{array}{ccccc} \frac{\mathbb{Z}}{r_1'} \\ \vdots \\ \frac{\mathbb{Z}}{r_s'} \\ \mathbb{Z} \\ \vdots \\ \mathbb{Z} \end{array} \right))
\end{equation*}
hold.
\end{proof}
Hence, by comparing Theorem \ref{EE'} with Theorem \ref{LL'}, we can expect that the isomorphism classes of holomorphic vector bundles $E_{(r,A,r',\mathcal{U},p,q)}\rightarrow T^{2n}_{J=T}$ correspond to the isomorphism classes of objects $\mathscr{L}_{(r,A,p,q)}$ of the Fukaya category $Fuk_{\mathrm{aff}}(\check{T}^{2n}_{J=T})$. Actually, by a direct calculation, we can check that there exists such a correspondence.

\section{Main result}
In this section, we prove that there exists a bijection between the set of the isomorphism classes of holomorphic vector bundles $E_{(r,A,r',\mathcal{U},p,q)}\rightarrow T^{2n}_{J=T}$ and the set of the isomorphism classes of objects $\mathscr{L}_{(r,A,p,q)}$ of the Fukaya category $Fuk_{\mathrm{aff}}(\check{T}^{2n}_{J=T})$.

First, we prepare two notations. We denote the set of the isomorphism classes of objects of the DG-category $DG_{T^{2n}_{J=T}}$ (i.e., the set of the isomorphism classes of holomorphic vector bundles $E_{(r,A,r',\mathcal{U},p,q)}$) by
\begin{equation*}
\mathrm{Ob}^{isom}(DG_{T^{2n}_{J=T}}).
\end{equation*}
Similarly, we denote the set of the isomorphism classes of objects $\mathscr{L}_{(r,A,p,q)}$ of the Fukaya category $Fuk_{\mathrm{aff}}(\check{T}^{2n}_{J=T})$ by
\begin{equation*}
\mathrm{Ob}^{isom}(Fuk_{\mathrm{aff}}(\check{T}^{2n}_{J=T})).
\end{equation*}

Now, in order to state the main theorem, we define a map $F : \mathrm{Ob}(DG_{T^{2n}_{J=T}})\rightarrow \mathrm{Ob}(Fuk_{\mathrm{aff}}(\check{T}^{2n}_{J=T}))$ as follows. Clearly, we need four parameters $r$, $A$, $p$, $q$ when we define objects $\mathscr{L}_{(r,A,p,q)}$ of $Fuk_{\mathrm{aff}}(\check{T}^{2n}_{J=T})$. On the contrary, we need five parameters $r$, $A$, $p$, $q$, $\mathcal{U}$ when we define objects $E_{(r,A,r',\mathcal{U},p,q)}$ of $DG_{T^{2n}_{J=T}}$. Hence, when we define a map $\mathrm{Ob}(DG_{T^{2n}_{J=T}})\rightarrow \mathrm{Ob}(Fuk_{\mathrm{aff}}(\check{T}^{2n}_{J=T}))$, we must transform not only the information about four parameters $r$, $A$, $p$, $q$ but also the information about $\mathcal{U}$. For example, let us consider a map $\mathrm{Ob}(DG_{T^{2n}_{J=T}})\rightarrow \mathrm{Ob}(Fuk_{\mathrm{aff}}(\check{T}^{2n}_{J=T}))$ which is simply defined by
\begin{equation*}
E_{(r,A,r',\mathcal{U},p,q)}\mapsto \mathscr{L}_{(r,A,p,q)}.
\end{equation*}
Then, this map does not induce a bijection between $\mathrm{Ob}^{isom}(DG_{T^{2n}_{J=T}})$ and $\mathrm{Ob}^{isom}(Fuk_{\mathrm{aff}}(\check{T}^{2n}_{J=T}))$ unfortunately, and we can check it as follows. We set
\begin{equation*}
T=\mathbf{i}\cdot I_2, \ r=2, \ A=\left( \begin{array}{ccc} 1 & 0 \\ 0 & 0 \end{array} \right), \ p=q=0.
\end{equation*}
It is clear that $AT=(AT)^t$ holds and $r'=2$. For this quadruple $(r,A,p,q)\in \mathbb{N} \times M(2;\mathbb{Z}) \times \mathbb{R}^2 \times \mathbb{R}^2$, we define mutually distinct $\mathcal{U}$ and $\mathcal{U}'$ by
\begin{align*}
&\mathcal{U}:=\left\{ V_1=\left( \begin{array}{ccc} 0 & 1 \\ 1 & 0 \end{array} \right), \ U_1=\left( \begin{array}{ccc} 1 & 0 \\ 0 & -1 \end{array} \right), \ V_2=U_2=I_2 \in U(2) \right\}, \\
&\mathcal{U'}:=\left\{ V'_1=\left( \begin{array}{ccc} 0 & 1 \\ 1 & 0 \end{array} \right), \ U'_1=\left( \begin{array}{ccc} \mathbf{i} & 0 \\ 0 & -\mathbf{i} \end{array} \right), \ V'_2=U'_2=I_2 \in U(2) \right\}.
\end{align*}
In this situation, we can easily check that $E_{(r,A,r',\mathcal{U},p,q)}\not \cong E_{(r,A,r',\mathcal{U'},p,q)}$ holds by using Theorem \ref{EE'}, and actually, the above map $E_{(r,A,r',\mathcal{U},p,q)}\mapsto \mathscr{L}_{(r,A,p,q)}$ sends both $E_{(r,A,r',\mathcal{U},p,q)}$ and $E_{(r,A,r',\mathcal{U}',p,q)}$ to the same object $\mathscr{L}_{(r,A,p,q)}$. Thus, by concerning these facts, here, we define a map 
\begin{equation*}
F : \mathrm{Ob}(DG_{T^{2n}_{J=T}})\rightarrow \mathrm{Ob}(Fuk_{\mathrm{aff}}(\check{T}^{2n}_{J=T}))
\end{equation*}
by
\begin{equation*}
F(E_{(r,A,r',\mathcal{U},p,q)})=\mathscr{L}_{(r,A,p-\frac{r}{r'}\theta ,q+\frac{r}{r'}\xi )},
\end{equation*}
where $\xi $, $\theta \in \mathbb{R}^n$ denote the vectors associated to $\mathcal{U}$ in the sense of the definition (\ref{xitheta}). The following is the main theorem of this paper.
\begin{theo} \label{bijectivity}
The map $F$ induces a bijection between $\mathrm{Ob}^{isom}(DG_{T^{2n}_{J=T}})$ and $\mathrm{Ob}^{isom}(Fuk_{\mathrm{aff}}(\check{T}_{J=T}^{2n}))$.
\end{theo}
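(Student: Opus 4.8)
The plan is to deduce the bijectivity directly from the two classification results, Theorem \ref{EE'} and Theorem \ref{LL'}, after rewriting the former in the shifted coordinates built into the definition of $F$. Throughout, let $(\cdots)$ abbreviate the column lattice $\frac{\mathbb{Z}}{r_1'},\cdots,\frac{\mathbb{Z}}{r_s'},\mathbb{Z},\cdots,\mathbb{Z}$ that appears in both theorems. First I would set
\begin{equation*}
\tilde{p}:=p-\frac{r}{r'}\theta,\quad \tilde{q}:=q+\frac{r}{r'}\xi,\quad \tilde{p}':=p'-\frac{r}{r'}\theta',\quad \tilde{q}':=q'+\frac{r}{r'}\xi',
\end{equation*}
so that $F(E_{(r,A,r',\mathcal{U},p,q)})=\mathscr{L}_{(r,A,\tilde{p},\tilde{q})}$ and $F(E_{(r,A,r',\mathcal{U}',p',q')})=\mathscr{L}_{(r,A,\tilde{p}',\tilde{q}')}$. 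A short rearrangement shows that the defining congruence of Theorem \ref{EE'} is equivalent to
\begin{equation*}
(\tilde{p}-\tilde{p}')+T^t(\tilde{q}-\tilde{q}')\in 2\pi r\bigl(\mathcal{A}^{-1}(\cdots)\oplus T^t(\mathcal{B}^{-1})^t(\cdots)\bigr).
\end{equation*}

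Next I would exploit positivity of $\mathrm{Im}\,T$. The real-linear map $(a,b)\mapsto a+T^t b$ from $\mathbb{R}^n\times\mathbb{R}^n$ to $\mathbb{C}^n$ is an isomorphism: if $a+T^t b=0$ with $a,b\in\mathbb{R}^n$, then taking imaginary parts and using the invertibility of $\mathrm{Im}\,T^t=(\mathrm{Im}\,T)^t$ forces $b=0$ and hence $a=0$. Consequently the decomposition $\mathbb{C}^n=\mathbb{R}^n\oplus T^t\mathbb{R}^n$ is direct, so a vector $a+T^t b$ with $a,b\in\mathbb{R}^n$ lies in the lattice $2\pi r(\mathcal{A}^{-1}(\cdots)\oplus T^t(\mathcal{B}^{-1})^t(\cdots))$ if and only if $a\in 2\pi r\mathcal{A}^{-1}(\cdots)$ and $b\in 2\pi r(\mathcal{B}^{-1})^t(\cdots)$ separately. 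Applying this with $a=\tilde{p}-\tilde{p}'$ and $b=\tilde{q}-\tilde{q}'$ turns the single congruence above into exactly the pair of congruences of Theorem \ref{LL'} characterizing $\mathscr{L}_{(r,A,\tilde{p},\tilde{q})}\cong\mathscr{L}_{(r,A,\tilde{p}',\tilde{q}')}$. Thus
\begin{equation*}
E_{(r,A,r',\mathcal{U},p,q)}\cong E_{(r,A,r',\mathcal{U}',p',q')}\ \Longleftrightarrow\ F(E_{(r,A,r',\mathcal{U},p,q)})\cong F(E_{(r,A,r',\mathcal{U}',p',q')}),
\end{equation*}
which simultaneously shows that $F$ descends to a map on isomorphism classes and that the induced map is injective.

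For surjectivity I would start from an arbitrary object $\mathscr{L}_{(r,A,p_0,q_0)}$ of $Fuk_{\mathrm{aff}}(\check{T}^{2n}_{J=T})$. By Proposition \ref{propfukob} its data satisfies $AT=(AT)^t$, which is the same holomorphicity condition as in Proposition \ref{prophol}; and by Proposition \ref{simplicity} I may choose a nonempty $\mathcal{U}$ making $E_{(r,A,r',\mathcal{U},\,\cdot\,,\,\cdot\,)}$ simple, thereby fixing $\xi,\theta\in\mathbb{R}^n$ via the determinant relations (\ref{xitheta}). Setting $p:=p_0+\frac{r}{r'}\theta$ and $q:=q_0-\frac{r}{r'}\xi$ yields a genuine object $E_{(r,A,r',\mathcal{U},p,q)}$ of $DG_{T^{2n}_{J=T}}$ with $F(E_{(r,A,r',\mathcal{U},p,q)})=\mathscr{L}_{(r,A,p_0,q_0)}$, so $F$ is surjective onto objects and therefore onto isomorphism classes.

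I expect the only genuinely delicate point to be the directness of the decomposition $\mathbb{C}^n=\mathbb{R}^n\oplus T^t\mathbb{R}^n$, which is what allows the single lattice congruence of Theorem \ref{EE'} to be split into the two independent congruences of Theorem \ref{LL'}. Everything else is bookkeeping: the shift by $\bigl(\frac{r}{r'}\theta,\frac{r}{r'}\xi\bigr)$ encoded in $F$ is designed precisely to absorb the $\mathcal{U}$-dependent determinant data $\xi,\theta$ into the position and holonomy parameters of the Lagrangian, so that after the shift the two isomorphism criteria become literally the same condition.
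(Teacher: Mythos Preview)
Your central observation is correct and clarifying: once you pass to the shifted variables $(\tilde p,\tilde q)=(p-\tfrac{r}{r'}\theta,\,q+\tfrac{r}{r'}\xi)$, the single $\mathbb{C}^n$-valued congruence of Theorem \ref{EE'} and the pair of $\mathbb{R}^n$-valued congruences of Theorem \ref{LL'} become literally equivalent, because $\mathrm{Im}\,T$ invertible makes $(a,b)\mapsto a+T^t b$ a real-linear isomorphism $\mathbb{R}^n\times\mathbb{R}^n\to\mathbb{C}^n$. The paper uses exactly this splitting, but only implicitly, when it extracts the two separate congruences (\ref{pp'}) and (\ref{qq'}) from Theorem \ref{EE'}; so your core argument is a cleaner packaging of the same mechanism. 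Your surjectivity step is likewise the paper's.

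There is, however, a genuine gap. Your displayed equivalence
\[
E_{(r,A,r',\mathcal U,p,q)}\cong E_{(r,A,r',\mathcal U',p',q')}\ \Longleftrightarrow\ F(E_{(r,A,r',\mathcal U,p,q)})\cong F(E_{(r,A,r',\mathcal U',p',q')})
\]
compares only objects carrying the \emph{same} discrete label $(r,A)$, whereas $\mathrm{Ob}(DG_{T^{2n}_{J=T}})$ and $\mathrm{Ob}(Fuk_{\mathrm{aff}}(\check T^{2n}_{J=T}))$ are populated by all admissible labels, and distinct labels can yield isomorphic---indeed identical---objects (e.g.\ $E_{(kr,kA,r',\mathcal U,kp,kq)}=E_{(r,A,r',\mathcal U,p,q)}$ and $\mathscr L_{(kr,kA,kp,kq)}=\mathscr L_{(r,A,p,q)}$ for $k\in\mathbb N$). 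Hence the equivalence above does not by itself prove that $F$ descends to isomorphism classes, nor that the induced map is injective, when the two objects under comparison are presented with different $(r,A)$ and $(s,B)$. The paper closes this by a Chern-character argument on the complex side: from $E_{(r,A,r',\mathcal U,p,q)}\cong E_{(s,B,s',\mathcal V,u,v)}$ one reads off $r'=s'$ and $\tfrac{1}{r}A=\tfrac{1}{s}B$ via $ch_0$ and $ch_1$, then relabels the second bundle as an $E_{(r,A,\ldots)}$ so that Theorem \ref{EE'} applies; an analogous slope-matching reduction is invoked on the symplectic side for injectivity. You need to insert this short reduction step---or, equivalently, argue once that every object has a canonical primitive label and restrict attention to those---before your biconditional can be promoted to a statement about $F^{isom}$.
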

\begin{proof}
In this proof, for a given object $E_{(r,A,r',\mathcal{U},p,q)}\in \mathrm{Ob}(DG_{T^{2n}_{J=T}})$, we denote by $\xi $, $\theta \in \mathbb{R}^n$ the vectors associated to $\mathcal{U}$ in the sense of the definition (\ref{xitheta}). Similarly, for a given object $E_{(s,B,s',\mathcal{V},u,v)}\in \mathrm{Ob}(DG_{T^{2n}_{J=T}})$, we denote by $\tau $, $\sigma \in \mathbb{R}^n$ the vectors associated to $\mathcal{V}$ in the sense of the definition (\ref{xitheta}). We denote the induced map from the map $F$ by
\begin{equation*}
F^{isom} : \mathrm{Ob}^{isom}(DG_{T^{2n}_{J=T}}) \rightarrow \mathrm{Ob}^{isom}(Fuk_{\mathrm{aff}}(\check{T}^{2n}_{J=T})).
\end{equation*}
Explicitly, it is defined by
\begin{equation*}
F^{isom}([E_{(r,A,r',\mathcal{U},p,q)}])=[F(E_{(r,A,r',\mathcal{U},p,q)})],
\end{equation*}
where, of course, $[E_{(r,A,r',\mathcal{U},p,q)}]$ and $[F(E_{(r,A,r',\mathcal{U},p,q)})]$ denote the isomorphism class of $E_{(r,A,r',\mathcal{U},p,q)}$ and the isomorphism class of $F(E_{(r,A,r',\mathcal{U},p,q)})$, respectively. 

First, we check the well-definedness of the map $F^{isom}$. We take two arbitrary objects $E_{(r,A,r',\mathcal{U},p,q)}$, $E_{(s,B,s',\mathcal{V},u,v)}\in \mathrm{Ob}(DG_{T^{2n}_{J=T}})$ and assume 
\begin{equation*}
E_{(r,A,r',\mathcal{U},p,q)}\cong E_{(s,B,s',\mathcal{V},u,v)}.
\end{equation*}
By considering the $i$-th Chern characters $ch_i(E_{(r,A,r',\mathcal{U},p,q)})$, $ch_i(E_{(s,B,s',\mathcal{V},u,v)})$ of the holomorphic vector bundles $E_{(r,A,r',\mathcal{U},p,q)}$, $E_{(s,B,s',\mathcal{V},u,v)}$ for each $i\in \mathbb{N}$, we see
\begin{equation}
ch_i(E_{(r,A,r',\mathcal{U},p,q)}) = ch_i(E_{(s,B,s',\mathcal{V},u,v)}). \label{chern}
\end{equation}
We consider the equality (\ref{chern}) in the cases $i=0,1$. Then, we obtain
\begin{align*}
&r'=s', \\
&\frac{r'}{r}A=\frac{s'}{s}B, 
\end{align*}
so one has
\begin{equation*}
\frac{1}{r}A=\frac{1}{s}B. 
\end{equation*}
Hence, we see that there exists a $k\in \mathbb{N}$ such that
\begin{align}
&s=kr, \label{s} \\
&B=kA. \label{B}
\end{align}
Therefore, since we can regard $E_{(s,B,s',\mathcal{V},u,v)}$ as $E_{(kr,kA,r',\mathcal{V},u,v)}=E_{(r,A,r',\mathcal{V},\frac{1}{k}u,\frac{1}{k}v)}$, by Theorem \ref{EE'}, we see
\begin{align}
&p-\frac{r}{r'}\theta \equiv \frac{1}{k}\left( u-\frac{s}{s'}\sigma \right) \ (\mathrm{mod} \ 2\pi r\mathcal{A}^{-1} \left( \begin{array}{ccccc} \frac{\mathbb{Z}}{r_1'} \\ \vdots \\ \frac{\mathbb{Z}}{r_s'} \\ \mathbb{Z} \\ \vdots \\ \mathbb{Z} \end{array} \right) ), \label{pp'} \\
&q+\frac{r}{r'}\xi \equiv \frac{1}{k} \left( v+\frac{s}{s'}\tau \right) \ (\mathrm{mod} \ 2\pi r(\mathcal{B}^{-1})^t \left( \begin{array}{ccccc} \frac{\mathbb{Z}}{r_1'} \\ \vdots \\ \frac{\mathbb{Z}}{r_s'} \\ \mathbb{Z} \\ \vdots \\ \mathbb{Z} \end{array} \right) ). \label{qq'}
\end{align}
Thus, by Theorem \ref{LL'} and the relations (\ref{s}), (\ref{B}), (\ref{pp'}), (\ref{qq'}), we can conclude
\begin{equation*}
\mathscr{L}_{(r,A,p-\frac{r}{r'}\theta ,q+\frac{r}{r'}\xi )}\cong \mathscr{L}_{(s,B,u-\frac{s}{s'}\sigma ,v+\frac{s}{s'}\tau )},
\end{equation*}
namely,
\begin{equation*}
F(E_{(r,A,r',\mathcal{U},p,q)})\cong F(E_{(s,B,s',\mathcal{V},u,v)}).
\end{equation*}

Next, we prove that $F^{isom}$ is injective. We take two arbitrary objects $E_{(r,A,r',\mathcal{U},p,q)}$, $E_{(s,B,s',\mathcal{V},u,v)}\in \mathrm{Ob}(DG_{T^{2n}_{J=T}})$ and assume
\begin{equation*}
F(E_{(r,A,r',\mathcal{U},p,q)})\cong F(E_{(s,B,s',\mathcal{V},u,v)}),
\end{equation*}
namely,
\begin{equation*}
\mathscr{L}_{(r,A,p-\frac{r}{r'}\theta ,q+\frac{r}{r'}\xi )}\cong \mathscr{L}_{(s,B,u-\frac{s}{s'}\sigma ,v+\frac{s}{s'}\tau )}.
\end{equation*}
Then, we see that there exists a $k\in \mathbb{N}$ which satisfies the relations (\ref{s}) and (\ref{B}). Here, we take two matrices $\mathcal{A}$, $\mathcal{B}\in GL(n;\mathbb{Z})$ such that
\begin{equation}
\mathcal{A}A\mathcal{B}=\left( \begin{array}{ccccccc} \tilde{a_1} & & & & & \\ & \ddots & & & & \\ & & \tilde{a_t} & & & \\ & & & 0 & & \\ & & & & \ddots & \\ & & & & & 0 \end{array} \right), \label{mat1}
\end{equation}
where $\tilde{a_i}\in \mathbb{N}$ ($i=1,\cdots, t$, $1\leq t\leq n$) and $\tilde{a_i} | \tilde{a_{i+1}}$ ($i=1,\cdots, t-1$). Therefore, since the relation (\ref{B}) holds, we obtain
\begin{equation}
\mathcal{A}B\mathcal{B}=\mathcal{A} ( kA ) \mathcal{B}=\left( \begin{array}{ccccccc} k\tilde{a_1} & & & & & \\ & \ddots & & & & \\ & & k\tilde{a_t} & & & \\ & & & 0 & & \\ & & & & \ddots & \\ & & & & & 0 \end{array} \right). \label{mat2}
\end{equation}
In particular, the relations (\ref{s}), (\ref{B}), (\ref{mat1}), (\ref{mat2}) imply
\begin{equation}
r'=s'. \label{eq10}
\end{equation}
Hence, by Theorem \ref{LL'}, the relations (\ref{pp'}) and (\ref{qq'}) hold. Now, note that we can regard $E_{(s,B,s',\mathcal{V},u,v)}$ as $E_{(kr,kA,r',\mathcal{V},u,v)}=E_{(r,A,r',\mathcal{V},\frac{1}{k}u,\frac{1}{k}v)}$ by the relations (\ref{s}), (\ref{B}), (\ref{eq10}). Thus, by Theorem \ref{EE'} and the relations (\ref{pp'}) and (\ref{qq'}), we see that
\begin{equation*}
E_{(r,A,r',\mathcal{U},p,q)}\cong E_{(r,A,r',\mathcal{V},\frac{1}{k}u,\frac{1}{k}v)}
\end{equation*}
holds, and this relation indicates
\begin{equation*}
E_{(r,A,r',\mathcal{U},p,q)}\cong E_{(s,B,s',\mathcal{V},u,v)}.
\end{equation*}

Finally, we prove that $F^{isom}$ is surjective. We take an arbitrary quadruple $(r,A,p,q)\in \mathbb{N}\times M(n;\mathbb{Z})\times \mathbb{R}^n\times \mathbb{R}^n$, and consider the element
\begin{equation*}
[\mathscr{L}_{(r,A,p,q)}]\in \mathrm{Ob}^{isom}(Fuk_{\mathrm{aff}}(\check{T}^{2n}_{J=T})).
\end{equation*}
In particular, a representative of $[\mathscr{L}_{(r,A,p,q)}]$ is expressed as
\begin{equation*}
\mathscr{L}_{(r,A,p+2\pi r\mathcal{A}^{-1}M,q+2\pi r(\mathcal{B}^{-1})^tN)}
\end{equation*}
by using a pair 
\begin{equation*}
(M,N)\in \left( \begin{array}{ccccc} \frac{\mathbb{Z}}{r_1'} \\ \vdots \\ \frac{\mathbb{Z}}{r_s'} \\ \mathbb{Z} \\ \vdots \\ \mathbb{Z} \end{array} \right) \times \left( \begin{array}{ccccc} \frac{\mathbb{Z}}{r_1'} \\ \vdots \\ \frac{\mathbb{Z}}{r_s'} \\ \mathbb{Z} \\ \vdots \\ \mathbb{Z} \end{array} \right)
\end{equation*}
(see Theorem \ref{LL'}). For the element $[\mathscr{L}_{(r,A,p,q)}]$, we consider the element
\begin{equation*}
[E_{(r,A,r',\mathcal{U},p+\frac{r}{r'}\theta ,q-\frac{r}{r'}\xi )}]\in \mathrm{Ob}^{isom}(DG_{T^{2n}_{J=T}}),
\end{equation*}
where $\xi $, $\theta \in \mathbb{R}^n$ are the vectors associated to $\mathcal{U}$ in the sense of the definition (\ref{xitheta}). Here, note that how to choose a set $\mathcal{U}$ is not unique even if we fix a quadruple $(r,A,p,q)\in \mathbb{N}\times M(n;\mathbb{Z})\times \mathbb{R}^n\times \mathbb{R}^n$. Therefore, a representative of $[E_{(r,A,r',\mathcal{U},p+\frac{r}{r'}\theta ,q-\frac{r}{r'}\xi )}]$ is expressed as
\begin{equation*}
E_{(r,A,r',\mathcal{U}',p+\frac{r}{r'}\theta '+2\pi r\mathcal{A}^{-1}M,q-\frac{r}{r'}\xi '+2\pi r(\mathcal{B}^{-1})^tN)}
\end{equation*}
by using a set $\mathcal{U}'$ with the associated vectors $\xi '$, $\theta '\in \mathbb{R}^n$ and a pair 
\begin{equation*}
(M,N)\in \left( \begin{array}{ccccc} \frac{\mathbb{Z}}{r_1'} \\ \vdots \\ \frac{\mathbb{Z}}{r_s'} \\ \mathbb{Z} \\ \vdots \\ \mathbb{Z} \end{array} \right) \times \left( \begin{array}{ccccc} \frac{\mathbb{Z}}{r_1'} \\ \vdots \\ \frac{\mathbb{Z}}{r_s'} \\ \mathbb{Z} \\ \vdots \\ \mathbb{Z} \end{array} \right)
\end{equation*}
(see Theorem \ref{EE'}). Then, by a direct calculation, we see
\begin{align*}
&F^{isom}([E_{(r,A,r',\mathcal{U},p+\frac{r}{r'}\theta ,q-\frac{r}{r'}\xi )}]) \\
&=F^{isom}([E_{(r,A,r',\mathcal{U}',p+\frac{r}{r'}\theta '+2\pi r\mathcal{A}^{-1}M,q-\frac{r}{r'}\xi '+2\pi r(\mathcal{B}^{-1})^tN)}]) \\
&=[F(E_{(r,A,r',\mathcal{U}',p+\frac{r}{r'}\theta '+2\pi r\mathcal{A}^{-1}M,q-\frac{r}{r'}\xi '+2\pi r(\mathcal{B}^{-1})^tN)})] \\
&=[\mathscr{L}_{(r,A,(p+\frac{r}{r'}\theta '+2\pi r\mathcal{A}^{-1}M)-\frac{r}{r'}\theta ',(q-\frac{r}{r'}\xi '+2\pi r(\mathcal{B}^{-1})^tN)+\frac{r}{r'}\xi ')}] \\
&=[\mathscr{L}_{(r,A,p+2\pi r\mathcal{A}^{-1}M,q+2\pi r(\mathcal{B}^{-1})^tN)}] \\
&=[\mathscr{L}_{(r,A,p,q)}].
\end{align*}
This completes the proof.
\end{proof}

\section*{Acknowledgment}
I would like to thank Hiroshige Kajiura for various advices in writing this paper. I also would like to thank Masahiro Futaki and Atsushi Takahashi for helpful comments. Finally, I am grateful to the referee for useful suggestions. This work was supported by Grant-in-Aid for JSPS Research Fellow 18J10909.

\end{document}